\documentclass[smallextended]{svjour3}

\synctex=-1
\usepackage{color}
\usepackage{float}
\usepackage{bm}
\usepackage{amssymb,amsmath}
\usepackage[noadjust]{cite}
\usepackage{graphicx}
\usepackage{comment}
\usepackage{marginnote}
\usepackage{paralist}

\smartqed

\usepackage{mathrsfs}
\usepackage{mathtools}
\usepackage{algorithmic}
\usepackage{enumitem}
\allowdisplaybreaks 

\makeatletter
\let\oldfootnote\footnote
\def\footnote{\@ifstar\footnote@star\footnote@nostar}
\def\footnote@star#1{{\let\thefootnote\relax\footnotetext{#1}}}
\def\footnote@nostar{\oldfootnote}
\makeatother

\usepackage[noadjust]{cite}
\usepackage{subfigure}

\floatstyle{ruled}
\newfloat{algorithm}{tbp}{loa}
\providecommand{\algorithmname}{Algorithm}
\floatname{algorithm}{\protect\algorithmname}



\newcounter{algo}

\renewcommand{\d}{\mathbf{d}}

\newcommand{\argmax}{\mathop{\rm argmax}}

\newcommand{\bx}{{\mathbf{x}}}
\newcommand{\by}{{\mathbf{y}}}

\usepackage[footnotesize]{caption}
\usepackage{flushend}

\allowdisplaybreaks[4]
\makeatother


\begin{document}


\title{Asynchronous Parallel  Algorithms for Nonconvex Big-Data Optimization\\ Part II: Complexity and Numerical Results}

\author{Loris Cannelli \and Francisco Facchinei  \and  Vyacheslav Kungurtsev \and Gesualdo Scutari}

\institute{ Loris Cannelli and Gesualdo Scutari, 
School of Industrial Engineering, Purdue University, USA \email{$<$lcannelli, gscutari$>$@purdue.edu}. \and
Francisco Facchinei, Department of Computer, Control, and Management Engineering Antonio Ruberti, University of Rome La Sapienza,  Roma, Italy  \email{francisco.facchinei@uniroma1.it}.\and Vyacheslav Kungurtsev
Dept. of Computer Science, Faculty of Electrical Engineering, Czech Technical University in Prague, Czech
 \email{vyacheslav.kungurtsev@fel.cvut.cz}.\newline  The work of Cannelli and Scutari was supported by the USA National Science Foundation (NSF) under Grants CIF 1564044, CCF 1632599 and CAREER Award No. 1555850, and the Office of Naval Research (ONR) Grant N00014-16-1-2244. 
}

\date{\today}

\maketitle
\begin{abstract}
 We present complexity and numerical results for a new asynchronous parallel algorithmic method
for the minimization of the sum of a
 smooth nonconvex function and a convex nonsmooth regularizer, subject to both convex and nonconvex constraints.
The proposed  method hinges on successive convex approximation techniques and a novel probabilistic model that  
captures key elements of  modern computational architectures  and asynchronous implementations in a more faithful way 
than  state-of-the-art models. In the companion paper \cite{companion2}  we provided a detailed description on the 
probabilistic model and gave  convergence
results for a diminishing stepsize version of our method. Here, we provide theoretical complexity results for a fixed stepsize version of the method and report  extensive numerical comparisons on both
convex and nonconvex problems demonstrating the efficiency of our  approach.

\keywords{Asynchronous algorithms \and  big-data\and convergence rate\and nonconvex constrained optimization. }

  \end{abstract}
  

	\section{Introduction}
The rise of the big data challenge  has created a strong demand for highly parallelizable algorithms solving huge 
optimization problems quickly and reliably. 
Recent research on algorithms incorporating asynchrony
 has given promising theoretical and computational results but, as discussed thoroughly in the companion paper \cite{companion2}, to which we refer the reader for a comprehensive bibliographical review, there is still room for huge improvements. In \cite{companion2}
we presented a novel probabilistic model we believe gives a more accurate description of asynchrony as encountered in modern computational architectures and proposed a new diminishing stepsize minimization method for which we proved convergence to stationary points. Here,
we complete the analysis by proving complexity results for a fixed stepsize version of the method proposed in \cite{companion2} and by reporting numerical test
results comparing our algorithm to   state-of-the-art methods on  both convex and nonconvex problems.

	We consider the minimization of a smooth (possibly) \textit{nonconvex} function $f$ and a nonsmooth block-separable 
	convex one $G$ subject to convex constraints $\mathcal{X}$ and local nonconvex ones $c_{j_i}(\mathbf{x}_i)\leq0$. The formulation reads
	\begin{equation}
	\begin{aligned}
	& \underset{\mathbf{x}=(\mathbf{x}_1,\ldots,\mathbf{x}_N)}{\min} & & F(\mathbf{x})\triangleq f(\mathbf{x})+G(\mathbf{x}) \\
	& \text{subject to} & & \begin{rcases}\mathbf{x}_i\in\mathcal{X}_i,\quad i=1,\ldots, N\\
	c_{j_1}(\mathbf{x}_1)\leq0,\quad j_1=1,\ldots,m_1\\
	\vdots\\
	c_{j_N}(\mathbf{x}_N)\leq0,\quad j_N=m_{N-1}+1,\ldots,m_N.
	\end{rcases}\triangleq\mathcal{K}
	\end{aligned}
	\label{ncc_1}
	\end{equation}
	We denote by  $\mathcal{X}=\mathcal{X}_1\times\ldots\times\mathcal{X}_N$ the  Cartesian product of the lower dimensional 
	closed, convex sets $\mathcal{X}_i\subseteq\mathbb{R}^{n_i}$.
	We consider the presence of 
	$m_i-m_{i-1}$ nonconvex local constraints $c_{j_i}:\mathcal{X}_i\rightarrow\mathbb{R}$ with $j_i=m_{i-1},\ldots,m_i$, 
	for each block of variables $\mathbf{x}_i$, with $m_i\geq0$, for $i\in\mathcal{N}\triangleq \{1,\ldots,N\}$ and $m_0=0$.  We denote  by  $\mathcal{K}_i$ the set 
	$
	\mathcal{K}_i\triangleq \left\{\mathbf{x}_i\in\mathcal{X}_i:\,
	c_{j_i}(\mathbf{x}_i)\leq 0,\quad j_i = m_{i-1}+1,\ldots,m_i\right\}$.
	The function $f$ is  smooth (not necessarily convex or separable)
	and $G$  is convex, separable, and  possibly nondifferentiable.\\
	\noindent \textbf{Assumption A} We  make the following blanket assumptions.
	\begin{description}[topsep=-2.0pt,itemsep=-2.0pt]
		\item[(A1)]  Each $\mathcal{X}_i$ is nonempty, closed and convex;\smallskip
		\item[(A2)] $f$ is $C^1$ on an open set containing $\mathcal{X}$;\smallskip
		\item[(A3)]  $\nabla_{\mathbf{x}_i} f$ is   Lipschitz continuous
		on $\mathcal{X}_i$ with a Lipschitz constant $L_f$ which is independent of $i$;\smallskip
		\item[(A4)] $G(\mathbf{x})\triangleq\sum_ig_i(\mathbf{x}_i)$, and each  $g_i(\mathbf{x}_i)$ is continuous, convex, 
and Lipschitz continuous with constant $L_g$ on $\mathcal{X}_i$ (but possibly nondifferentiable);
		\smallskip
		\item[(A5)] $\mathcal{K}$ is compact;\smallskip
		\item[(A6)]  Each $c_{j_i}:\mathcal{X}_i\rightarrow\mathbb{R}$ is continuously differentiable on $\mathcal{X}_i$, for 
		all $i\in\mathcal{N}$ and  $j_i\in\{m_{i-1}+1,\ldots,m_i\}$, $m_i\geq0$ and $m_0=0$.
	\end{description}\smallskip

\noindent
In this paper we study  the complexity  of a version of the algorithm proposed in the companion paper~\cite{companion2} that uses a fixed stepsize. We prove that the expected value of an appropriate stationarity measure  becomes smaller than $\epsilon$ after a number of 
iterations proportional to $1/\epsilon$. We also show that if the stepsize is small enough, then a linear speedup can be expected as the number of cores increases.
Although comparisons are difficult both  because our probabilistic  model is different from and  more accurate than most usually used in the literature, our complexity results seem comparable to the ones in the literature for both convex problems (see for example 
\cite{hong2014distributed,liu2015asynchronous,Mania_et_al_stochastic_asy2016,Hogwild!,peng2015arock}), and 
nonconvex problems (\cite{lian2015asynchronous} and
\cite{davis2016asynchronous,DavisEdmundsUdell}). 
In~\cite{lian2015asynchronous} complexity is analyzed for a stochastic gradient methods for {\em unconstrained,} smooth 
nonconvex problems. 
It is shown that  a number of iterations proportional to 
$1/\epsilon$ is needed in order to drive the expected value of the gradient below $\epsilon$. Similar results are proved 
\cite{davis2016asynchronous,DavisEdmundsUdell} for nonconvex, constrained problems.
However, recall that in \cite{companion2} it was observed that the probabilistic models used in
\cite{davis2016asynchronous,DavisEdmundsUdell,lian2015asynchronous} or, from another point of view, the implicit 
assumptions made in these papers, are problematic. Furthermore it should also be observed that the methods for 
nonconvex, constrained  problems in \cite{davis2016asynchronous,DavisEdmundsUdell} require the global solution of 
nonconvex subproblems, making these methods of uncertain practical use, in general.
Therefore,   the complexity results presented in this paper are of novel value and represent an advancement
in the state of the art of asynchronous algorithms for large-scale (nonconvex) optimization problems.

We also provide extensive numerical results for the diminishing stepsize method proposed in the companion 
paper 
\cite{companion2}. We compare the performance and the speedup of this method with the most advanced
state of the art algorithms for both convex and nonconvex problems. 
The results show that our method compares favorably to existing asynchronous methods. 

The paper is organized as follows. In the next section, after briefly summarizing some necessary elements from 
\cite{companion2}, we describe the algorithm and in Section 3 we give the complexity results. In Section 4 we report the 
numerical results and finally draw some conclusions in Section 5. All proofs are given in the Appendix.

\section{Algorithm}
In this section we  describe the asynchronous model and algorithm as proposed in the companion paper \cite[Sec.\,4]{companion2}. The difference with \cite{companion2}  is that here, in order to study the convergence rate of the algorithm,  we enforce a fixed  stepsize (rather than using a diminishing stepsize). 
 
	
	We use a global
index $k$ to count iterations: whenever a core updates a block-component of
the current $\bx$, a new iteration $k \rightarrow k+1$ 
 is triggered (this iteration counter
is not required by the cores themselves to compute the updates).   
 Therefore, at each iteration $k$, there is a core that updates   in an independent and asynchronous fashion a block-component  $\bx_{i^k}$ of $\bx^k$ randomly
chosen,  
thus generating the vector $\bx^{k+1}$.
Hence,
$\bx^{k}$ and $\bx^{k+1}$ only differ in the $i^k$-th   component.  	
	 To update    block  $i^k$, a core generally
does not have access to the global vector $\bx^k$, but will instead use   the possibly out-of-sync, delayed  coordinates
$\tilde \bx^k=\bx^{k-\d^k}\triangleq (\bx_1^{k-d^k_1},\ldots ,\bx_N^{k-d^k_N})$,   where  
 $d^k_i$ are some nonnegative integer numbers. Given $\tilde \bx^k$, to update the $i^k$-th component,    the following strongly convex problems is first solved  
 \begin{equation}
	\hat{\textbf{x}}_i^k(\tilde{\mathbf{x}}^{k}) = \underset{\textbf{x}_{i^k} \in \mathcal{K}_{i^k}(\tilde{\mathbf{x}}^{k}_{i^k})}{\arg\min} {\tilde{F}_{i^k}(\textbf{x}_{i^k};\tilde{\mathbf{x}}^{k})}\triangleq \tilde f_{i^k}(\bx_{i^k}; \tilde{\mathbf{x}}^{k}) + g_{i^k}(\bx_{i^k}),
	\label{ncc_2}
	\end{equation}
	where $\tilde f_{i}: \mathcal{X}_i\times \mathcal{K}\to \mathbb{R}$ and $\mathcal{K}_i(\bullet)$, with $i=1,\ldots, N$, represent  convex approximations of $f$ and $\mathcal{K}_i$, respectively, defined according to the rules listed below.  Then block  $i^k$ is updated according to 
\begin{equation} \mathbf{x}_{i^k}^{k+1}=\mathbf{x}^k_{i^k}+\gamma(\hat{\mathbf{x}}_{i^k}(\tilde{\mathbf{x}}^{k})-\mathbf{x}_{i^k}^k);\end{equation}
where $\gamma$ is a positive constant.



	 	We require the following  assumptions on the surrogate function $\tilde f_{i}$.\smallskip\\
	\noindent \textbf{Assumption B (On the surrogate functions).} Each $\tilde{f}_i$ is  a function continuously differentiable with respect to the first argument such that:
	\begin{description}
		\item[ (B1)] $\tilde f_{i} (\mathbf{\bullet}; {{\by}})$ is uniformly strongly
		convex	on $ \mathcal{X}_i$ for all ${{\by}}\in \mathcal{K}$ with a strong convexity constant $c_{\tilde{f}}$ which is independent of  $i$ and $k$;\smallskip
		\item[  (B2)]   $\nabla \tilde f_{i} ({\by}_i;\by) = \nabla_{{\by}_i} f({\by})$, for all ${\by} \in  \mathcal{K}$;\smallskip
		\item[  (B3)]  $\nabla \tilde f_{i} (\mathbf{y}_i;\mathbf{\bullet})$
		is Lipschitz continuous
		on $ \mathcal{K}$,
		for all $\mathbf{y}_i \in  \mathcal{X}_i$, with a Lipschitz constant $L_B$ which is independent of  $i$ and $k$;\smallskip
		\item [  (B4)]$\nabla \tilde f_{i} (\mathbf{\bullet};\mathbf{y})$
		is Lipschitz continuous
		on $ \mathcal{X}_i$, for all $\mathbf{y} \in  \mathcal{K}$, with a Lipschitz constant $L_E$ which is independent of $i$ and $k$.
	\end{description}\smallskip
	$\mathcal{K}_i(\mathbf{y}_i)$ is a convex approximation of $\mathcal{K}_i$ defined by	\begin{equation}
	\mathcal{K}_i(\mathbf{y}_i)\triangleq\begin{cases}
	\tilde{c}_{j_i}(\mathbf{x}_i;\mathbf{y}_i)\leq 0,\quad j_i=m_{i-1}+1,\ldots,m_i\\
	\mathbf{x}_i\in\mathcal{X}_i
	\end{cases},\,i=1,\ldots,N,
	\label{ncc_3}
	\end{equation}
	where $\tilde{c}_{j_i}:\mathcal{X}_i\times\mathcal{K}_i\rightarrow\mathbb{R}$ is required to satisfy the following assumptions.\smallskip\\
	\noindent \textbf{Assumption C (On $\tilde{c}_{j_i}$'s).} For each $i\in\mathcal{N}$ and $j_i\in\{m_{i-1}+1,\ldots,m_i\}$, it holds: 
	\begin{description}[topsep=-2.0pt,itemsep=-2.0pt]
		\item[  (C1)]$\tilde{c}_{j_i}(\bullet;\mathbf{y})$ is convex on $\mathcal{X}_i$ for all $\mathbf{y}\in\mathcal{K}_i$;\smallskip
		\item[  (C2)] $\tilde{c}_{j_i}(\mathbf{y};\mathbf{y})=c_{j_i}(\mathbf{y})$, for all $\mathbf{y}\in\mathcal{K}_i$;\smallskip
		\item[  (C3)] $c_{j_i}(\mathbf{z})\leq\tilde{c}_{j_i}(\mathbf{z};\mathbf{y})$ for all $\mathbf{z}\in\mathcal{X}_i$ and $\mathbf{y}\in\mathcal{K}_i$;\smallskip
		\item[  (C4)] $\tilde{c}_{j_i}(\bullet;\bullet)$ is continuous on $\mathcal{X}_i\times\mathcal{K}_i$;\smallskip
		\item[  (C5)] $\nabla c_{j_i}(\mathbf{y})=\nabla_1\tilde{c}_{j_i}(\mathbf{y};\mathbf{y})$, for all $\mathbf{y}\in\mathcal{K}_i$;\smallskip
		\item[  (C6)] $\nabla\tilde{c}_{j_i}(\bullet;\bullet)$ is continuous on $\mathcal{X}_i\times\mathcal{K}_i$;\smallskip
		\item[ (C7)] Each $\tilde{c}_{j_i}(\bullet;\bullet)$ is Lipschitz continuous on $\mathcal{X}_i\times\mathcal{K}_i$;\medskip
	\end{description}
	where $\nabla \tilde{c}_{j_i}$ is
	the partial gradient of $\tilde{c}_{j_i}$ with respect to the first argument.


	The randomness associated with the block selection procedure and the delayed information being used to compute the solution
to the subproblems is described in our model with the index-delay pair $(i,\mathbf{d})$ used at each iteration $k$ to update $\mathbf{x}^k$ being a realization of a random vector $\underline{\boldsymbol{\omega}}^k\triangleq (\underline{i}^k,\underline{\mathbf{d}}^k)$, taking values on $\mathcal N \times \mathcal D$ with some probability $p_{{i},{\mathbf{d}}}^k\triangleq \mathbb{P}({(\underline{i}^k,\underline{\mathbf{d}}^k)=({i},{\mathbf{d}}}))$, where ${\cal D}$ is the set    of all possible delay vectors.   Since each delay $d_i \leq \delta$ (see Assumption C below),  ${\cal D}$ is the set of all possible  $N$-length vectors whose components are integers between $0$ and $\delta$. More formally, let    $\Omega$ be the sample space  of   all  the  sequences $\{ (i^k, \mathbf{d}^k)\}_{k\in\mathbb{N}_+}$, and  let us define  the discrete-time, discrete-value   stochastic process $\underline{\boldsymbol{\omega}}$, where   $\{\underline{\boldsymbol{\omega}}^k(\omega)\}_{k\in\mathbb{N}_+}$  is a sample path of the process. The  $k$-th entry  $\underline{\boldsymbol{\omega}}^k(\omega)$ of $\underline{\boldsymbol{\omega}}(\omega)-$the $k$-th element of the sequence $\omega-$is a realization of the random vector  $\underline{\boldsymbol{\omega}}^k= (\underline{i}^k,\underline{\mathbf{d}}^k):\Omega \mapsto \mathcal N\times \mathcal D$. 

	The stochastic process   $\underline{\boldsymbol{\omega}}$ is  fully defined  once the joint finite-dimensional probability mass functions $p_{\underline{\boldsymbol{\omega}}^{0:k}}(\boldsymbol{\omega}^{0:k})\triangleq \mathbb{P}(\underline{\boldsymbol{\omega}}^{0:k}=\boldsymbol{\omega}^{0:k})$  are given, for all admissible tuples $\boldsymbol{\omega}^{0:k}$ and $k$, where we used the shorthand notation $\underline{\boldsymbol{\omega}}^{0:t} \triangleq (\underline{\boldsymbol{\omega}}^0,\underline{\boldsymbol{\omega}}^1, \ldots,
	\underline{\boldsymbol{\omega}}^t)$ (the first $t+1$ random variables), and
	${\boldsymbol{\omega}}^{0:t} \triangleq ({\boldsymbol{\omega}}^0,{\boldsymbol{\omega}}^1, \ldots,{\boldsymbol{\omega}}^t)$ ($t+1$ possible values
	for the random variables  $\underline{\boldsymbol{\omega}}^{0:t}$). In fact, this joint distribution induces a valid probability space $(\Omega, \mathcal{F},P)$ over which $\underline{\boldsymbol{\omega}}$ is well-defined and has $p_{\underline{\boldsymbol{\omega}}^{0:k}}$ as its finite-dimensional distributions. 

This process fully describes the evolution of Algorithm 1. Indeed,  given a starting point $\bx^0$,  the trajectories  of   the  variables $\bx^k$ and  $ \bx^{k-\mathbf{d}}$ are  completely determined once a sample path $\{ (i^k, \mathbf{d}^k)\}_{k\in \mathbb{N}_+}$ is drawn by $\underline{\boldsymbol{\omega}}$.

Finally, we need to define the conditional probabilities  $p((i,\mathbf{d})\,|\, \boldsymbol{\omega}^{0:k})\triangleq
\mathbb{P}(\underline{\boldsymbol{\omega}}^{k+1}=(i,\mathbf{d})|
\underline{\boldsymbol{\omega}}^{0:k}={\boldsymbol{\omega}}^{0:k})$. We require a few minimal conditions on the probabilistic model, as stated next.\\
	\noindent\textbf{Assumption D} Given the global model described in  Algorithm 1 and the stochastic process
	$\underline{\boldsymbol{\omega}}$, suppose that
	\begin{description}
		\item[(D1)] There   exists a $\delta\in \mathbb{N}_+$, such that $d^k_{i} \leq \delta$,  for all $i$ and $k$;
		
		\item[(D2)]  For all $i=1,\ldots, N$ and $\boldsymbol{\omega}^{0:k-1}$ such that
		$p_{\underline{\boldsymbol{\omega}}^{0:k-1}}(\boldsymbol{\omega}^{0:k-1})>0$, it holds $$\sum_{\mathbf{d}\in
			\mathcal D}p((i,\mathbf{d})\,|\, {\boldsymbol{\omega}}^{0:k-1})\geq p_{\min},$$ for some $p_{\min}>0$;

		\item[(D3)] For a given $\omega=(\omega^k)_k\in\bar{\Omega}$, where $\bar{\Omega}\subseteq\Omega$ such that $\mathbb{P}(\bar{\Omega})=1$, there exists a set $\mathcal{V}(\omega)\subseteq\mathcal{N}\times\mathcal{D}$ such that\\$p((i,\mathbf{d})|\boldsymbol{\omega}^{0:k-1})\geq\Delta>0$ for all $(i,\mathbf{d})\in\mathcal{V}(\omega)$ and $k$, and\\$p((i,\mathbf{d})|\boldsymbol{\omega}^{0:k-1})=0$ otherwise.
		
		\item[(D4)] The block variables are partitioned in $P$ sets $I_1,\ldots,I_P$, where $P$ is the number of available cores, and each core processes a different set $I_j$ of block variables. This implies: $\mathbf{x}_{i^k}^k=\tilde{\mathbf{x}}_i^k$.\medskip
	\end{description}	

  Note that D4 is necessary in order to preserve feasibility of the iterates. This is because the feasible set is nonconvex, and so a convex combination of the two 
feasible vectors $\hat\bx(\tilde \bx^k)$ and $\bx^k$ may not be feasible. However, by construction of the subproblem, 
$\tilde{\bx}^k$ and $\hat\bx(\tilde\bx^k)$ are both feasible and lie in a convex subset of the feasible set, and thus
any convex combination of them is feasible for the original constraints.
	
	We point out that this setting (D4) has proved to be very effective from an experimental point of view even in the absence of nonconvex constraints. 
This has been observed also in \cite{liu2015asyspcd}, and indeed is the setting in which the experiments were performed, despite the convergence proof
drawn up without this arrangement.

	\begin{algorithm}[t]
		\caption{AsyFLEXA: A Global Description}
		\label{alg:global}
		\begin{algorithmic}
			\STATE{\textbf{Initialization:} $k=0$, $\mathbf{x}^0\in\mathcal{X}$,  $\gamma>0$, and $k$.
			}
			\WHILE{a termination criterion is not met}
			\STATE{\texttt{(S.1)}: The random variables $(\underline{i}^k,\underline{\mathbf{d}}^k)$ take realization  $(i,\mathbf{d})$;}
			\STATE{\texttt{(S.2)}: Compute $\hat{\mathbf{x}}_{i}(\mathbf{x}^{k-\mathbf{d}})$ [cf.\,(\ref{ncc_2})];} 		
			\STATE{\texttt{(S.3)}: Read   $\bx_{i}^k$;}
			\STATE{\texttt{(S.4)}: Update $\mathbf{x}_i$ by setting  \begin{equation}\label{update_algo_global}\mathbf{x}_{i}^{k+1}=\mathbf{x}^k_{i}+\gamma(\hat{\mathbf{x}}_{i}(\mathbf{x}^{k-\mathbf{d}})-\mathbf{x}_{i}^k);\end{equation}\vspace{-0.4cm}}
			\STATE{\texttt{(S.5)}: Update the iteration counter $k \leftarrow k+1;$} 	\ENDWHILE
			\RETURN $\mathbf{x}^k$
		\end{algorithmic}
	\end{algorithm}

\section{Complexity}	Assumptions A-D in the previous two sections correspond exactly to the assumptions used in the companion paper \cite{companion2} in order to analyze the convergence properties of 
Algorithm 1 therein when applied to the general non convex problem \eqref{ncc_1}; more precisely they correspond to Assumptions  A$^\prime$, B$^\prime$, E, and  C respectively.  Note that Algorithm \ref{alg:global} and Algorithm
AsyFLEXA-NCC in \cite[Section 4]{companion2} are identical with the only exception that {\em in  \cite{companion2} we use a diminishing stepsize while in Algorithm \ref{alg:global} we use a fixed stepsize $\gamma$}. The difference is standard and derives from the necessity to get complexity results; as a part of our complexity result we will also show convergence of this fixed step size variant of  Algorithm 1 in \cite{companion2}.
Besides the rather standard Assumptions A-D, in order to get complexity results we also need that the solution
mapping $\hat{\textbf{x}}_i(\cdot)$ is Lipschitz continuous on $\cal X$.
\smallskip

	\noindent \textbf{Assumption E}
		$\hat{\textbf{x}}_i(\cdot)$ is Lipschitz continuous with constant 
		$L_{\hat{x}}$ on $\cal X$\medskip
\smallskip

\noindent
This assumption is rather mild. While, in order to concentrate on the complexity result,  we postpone its more detailed analysis to  the Appendix, we mention here that, in our setting and supposing $\cal X$ is bounded, it is automatically satisfied if the feasible region of  problem \eqref{ncc_1} is convex or,  in case of non convex constraints, if some  constraint qualifications are satisfied.

We will use the norm of the following quantity as a measure of optimality:
\begin{equation}
M_F(\mathbf{x})=\mathbf{x}-\underset{\mathbf{y}\in\mathcal{K}_1(\mathbf{x}_1)\times\ldots\times\mathcal{K}_N(\mathbf{x}_N)}{\text{arg min}}\{\nabla f(\mathbf{x})^\text{T}(\mathbf{y}-\mathbf{x})+g(\mathbf{y})+\frac{1}{2}\|\mathbf{y}-\mathbf{x}\|^2_2\}
\end{equation}
This is a valid measure of stationarity because $M_F(\mathbf{x})$ is continuous and $M_F(\mathbf{x})=0$ if 
and only if $\mathbf{x}$ is a stationary solution of Problem \eqref{ncc_1}. Note that it is an extension of the optimality measure given in~\cite{RFLEXA}
for Problem~\eqref{ncc_1} but without the nonconvex constraints. We will study the rate of decrease of $\mathbb{E}(\|M_F(\underline{\mathbf{x}}
^k)\|_2^2)$. More precisely, the following theorem gives an upper bound on the number of iterations needed to decrease $
\mathbb{E}(\|M_F(\underline{\mathbf{x}}^k)\|_2^2)$  below a certain chosen value $\epsilon$, provided that the stepsize 
$\gamma$ is smaller than a certain given constant.

\begin{theorem}\label{compl}
	Consider Assumptions A-E and let $\{\mathbf{x}^k\}$ be the sequence generated by Algorithm \ref{alg:global}, with  $\gamma$ such that:
\begin{equation}
0<\gamma\leq\min\left\{\frac{(1-\rho^{-1})}{2(1+L_{\hat{x}}N(3+2\psi))};\frac{c_{\tilde{f}}}{L_f+\frac{\delta\psi'L_f}{2\Delta}}\right\},
\label{eq:compgbound}
\end{equation}
where $\rho>1$ is any given number.
	Define $K_\epsilon$ to be the first iteration such that $\mathbb{E}(\|M_F(\underline{\mathbf{x}}^k)\|_2^2)\leq\epsilon$. Then:
	\begin{equation}\label{eq:compKbound}
	K_\epsilon\leq \frac{1}{\epsilon}\, \frac{4(1+(1+L_B+L_E)(1+L_EL_B\delta\psi'\gamma^2))}{ p_{\text{min}}\gamma(2\Delta(c_{\tilde{f}}-\gamma L_f)-\gamma\delta\psi'L_f)}(F(\mathbf{x}^0)-F^*),
	\end{equation} where $F^*=\underset{\mathbf{x}\in\mathcal{K}}{\min}\,F(\mathbf{x})$, $\psi\triangleq\sum\limits_{t=1}^{\delta}\rho^{\frac{t}{2}}$, and $\psi'\triangleq\sum\limits_{t=1}^{\delta}\rho^t$.\\
\end{theorem}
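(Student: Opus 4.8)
The plan is to combine a per-iteration sufficient-decrease estimate for $F$ with a Lyapunov-function argument whose geometric weights are tailored to absorb the errors caused by the delayed reads; the quantities $\psi$ and $\psi'$ in the statement are exactly the normalizing sums produced by these weights. Throughout I write $\mathbf{s}^k\triangleq \wh{\bx}_{i^k}(\tilde{\bx}^k)-\bx^k_{i^k}$ for the search direction, so that $\bx^{k+1}_{i^k}=\bx^k_{i^k}+\gamma\,\mathbf{s}^k$ and $\bx^{k+1}$ and $\bx^k$ agree off block $i^k$.

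First I would derive a one-step descent inequality. Applying the descent lemma to $f$ (using the Lipschitz gradient of A3) together with convexity of $g_{i^k}$ gives
\begin{equation}
F(\bx^{k+1})\leq F(\bx^k)+\gamma\,\nabla_{i^k} f(\bx^k)\trt\mathbf{s}^k+\gamma\bigl(g_{i^k}(\wh{\bx}_{i^k})-g_{i^k}(\bx^k_{i^k})\bigr)+\tfrac{\gamma^2 L_f}{2}\|\mathbf{s}^k\|^2.
\end{equation}
The crucial step is to exploit that $\wh{\bx}_{i^k}(\tilde{\bx}^k)$ minimizes the $c_{\tilde f}$-strongly convex surrogate $\tilde F_{i^k}(\cdot;\tilde{\bx}^k)$: its first-order optimality condition tested at the feasible point $\bx^k_{i^k}$ (feasibility granted by D4), combined with strong convexity (B1) and the consistency property B2 (which identifies $\nabla_{i^k}f(\bx^k)$ with $\nabla\tilde f_{i^k}(\bx^k_{i^k};\bx^k)$), lets me trade the surrogate gradient for the true one at the cost of a mismatch between the current and delayed second arguments. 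By B3 that mismatch is bounded by $L_B\|\bx^k-\tilde{\bx}^k\|$, producing $F(\bx^{k+1})-F(\bx^k)\leq -\gamma(c_{\tilde f}-\tfrac{\gamma L_f}{2})\|\mathbf{s}^k\|^2+\gamma L_B\,\|\mathbf{s}^k\|\,\|\bx^k-\tilde{\bx}^k\|$.

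Next I would control the delay term. Because consecutive iterates differ in a single block and all delays are at most $\delta$ (D1), $\|\bx^k-\bx^{k-\d^k}\|\leq \gamma\sum_{t=k-\delta}^{k-1}\|\mathbf{s}^t\|$, so the error couples $\|\mathbf{s}^k\|$ to the previous $\delta$ directions. To neutralize these cross terms I would introduce a Lyapunov function
\begin{equation}
\mathcal{L}^k\triangleq F(\bx^k)+\sum_{t=1}^{\delta}\beta_t\,\|\mathbf{s}^{k-t}\|^2,
\end{equation}
with weights $\beta_t$ chosen proportional to $\rho^{t}$; after splitting each cross term by Young's inequality, the geometric tails $\sum_t\rho^{t}$ and $\sum_t\rho^{t/2}$ collapse into $\psi'$ and $\psi$, respectively. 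The two-sided bound on $\gamma$ in \eqref{eq:compgbound} is precisely the condition guaranteeing that the net coefficient of $\|\mathbf{s}^k\|^2$ in $\mathcal{L}^{k+1}-\mathcal{L}^k$ is negative and uniformly bounded away from zero — the second term of \eqref{eq:compgbound} keeping the curvature margin $c_{\tilde f}-\gamma L_f$ positive after the $\delta\psi'/\Delta$ correction, and the first controlling the $M_F$-coupling discussed below.

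Finally, I would pass to expectations. Taking the conditional expectation given $\underline{\boldsymbol{\omega}}^{0:k-1}$ and invoking D2, every block is selected with probability at least $p_{\min}$, inserting the factor $p_{\min}$ in front of the expected decrease. It then remains to relate the aggregated step to $M_F$: the delayed block best response $\wh{\bx}_{i^k}(\tilde{\bx}^k)$ differs from the undelayed one $\wh{\bx}_{i^k}(\bx^k)$ by at most $L_{\hat x}\|\bx^k-\tilde{\bx}^k\|$ (Assumption E — the source of the $L_{\hat x}N(3+2\psi)$ term, once summed over the $N$ blocks with the delay weighting), while the undelayed best response is equivalent, up to constants fixed by $c_{\tilde f}$ and the unit proximal weight in the definition of $M_F$, to the corresponding block of $M_F(\bx^k)$; separability of $G$, the quadratic, and the product constraint make this comparison block-by-block. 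Summing the resulting inequality $\E(\mathcal{L}^{k+1})-\E(\mathcal{L}^k)\leq -C\,p_{\min}\gamma\,\E(\|M_F(\bx^k)\|_2^2)$ from $k=0$ to $K-1$ telescopes the left-hand side, and since the boundary Lyapunov terms are nonnegative and $F(\bx^K)\geq F^*$, this is bounded by $F(\bx^0)-F^*$; hence $\sum_{k<K}\E(\|M_F(\bx^k)\|_2^2)\leq (F(\bx^0)-F^*)/(C\,p_{\min}\gamma)$, and because every summand exceeds $\epsilon$ for $k<K_\epsilon$, the explicit bound \eqref{eq:compKbound} follows with $C$ matching its numerator. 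I expect the delay bookkeeping in the Lyapunov step — choosing the weights $\beta_t$ so that the geometric sums close and the coefficient stays negative under \eqref{eq:compgbound} — to be the main technical obstacle.
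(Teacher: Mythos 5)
Your skeleton agrees with the paper's at several points: the descent lemma plus convexity of $g_{i^k}$ for the one-step decrease, Young's inequality to split the cross term, conditional expectation with $p_{\min}$ from D2, telescoping against $F(\mathbf{x}^0)-F^*$, and the comparison of the delayed best response with the prox point defining $M_F$ via two first-order optimality conditions and B2--B4. But the step you yourself flag as ``the main technical obstacle'' is exactly where your route diverges and where it is not yet a proof. The paper does not use a Lyapunov function with memory terms. Instead it proves (Lemma 1 in the Appendix, by induction, using Assumption E and the first bound in \eqref{eq:compgbound}) the reverse inequality $\|\mathbf{w}_{\mathbf{x}}^{k-1}-\mathbf{x}^{k-1}\|_2^2\le\rho\,\|\mathbf{w}_{\mathbf{x}}^{k}-\mathbf{x}^{k}\|_2^2$ for a worst-case-delay residual $\mathbf{w}_{\mathbf{x}}^k$; iterating it pulls every past residual forward, $\sum_{l=k-\delta}^{k-1}\|\mathbf{w}_\mathbf{x}^l-\mathbf{x}^l\|_2^2\le\psi'\|\mathbf{w}_\mathbf{x}^k-\mathbf{x}^k\|_2^2$, so that the guaranteed decrease at step $k$ and the upper bound on $\|M_F(\mathbf{x}^k)\|_2^2$ are both expressed through the same current quantity, and the per-iteration inequality closes at each $k$. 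Your Lyapunov alternative pushes past step norms forward instead; that is a legitimate strategy in principle, but as stated the weights $\beta_t\propto\rho^t$ with $\rho>1$ increase in $t$, which is the wrong monotonicity for forward telescoping (the term $\|\mathbf{s}^{k-t}\|^2$ carries weight $\beta_t$ in $\mathcal{L}^k$ and $\beta_{t+1}$ in $\mathcal{L}^{k+1}$, so absorbing the positive Young contributions requires $\beta_{t+1}<\beta_t$). Moreover your accounting of where $L_{\hat x}N(3+2\psi)$ arises does not match the structure of the bound: in the paper, Assumption E and that constant live entirely inside the induction of Lemma 1, while the $M_F$ comparison uses only $L_B,L_E$ from B3--B4.

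A second concrete gap: you invoke only D2/$p_{\min}$ when passing to conditional expectation, but the update at iteration $k$ depends on the random delay $\mathbf{d}$ as well as on the random block $i$, and $\hat{\mathbf{x}}_i(\tilde{\mathbf{x}}^k(\mathbf{d}))$ changes with $\mathbf{d}$. To lower-bound the expected decrease by a deterministic full-vector quantity the paper needs D3 (each admissible pair $(i,\mathbf{d})$ has probability at least $\Delta$) together with the worst-case-over-delays vector $\mathbf{w}_\mathbf{x}^k$; this is where the $\Delta$ in the denominator of \eqref{eq:compKbound} and in the second term of \eqref{eq:compgbound} originates. Your sketch never introduces such an object, so the $\Delta$-dependent constants cannot be recovered as written. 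None of this means the theorem is unreachable by a Lyapunov argument, but the two items above --- a worked-out replacement for Lemma 1 with correctly oriented weights, and the D3/$\Delta$ treatment of the delay randomness --- are the substance of the proof and are missing.
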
	
\begin{proof}
	See Appendix.
\end{proof}


\begin{remark} \rm
There is an inverse proportionality  relation between the desired optimality tolerance
satisfaction and the required number of iterations, i.e., $K_\epsilon \propto \frac{1}{\epsilon}$. This is consistent with the 
sequential complexity result for SCA algorithms for nonconvex problems~\cite{RFLEXA} from which it can also be seen that the
constants match when asynchrony is removed. Similarly, it matches   results   on asynchronous methods
for nonconvex problems as appearing in~\cite{lian2015asynchronous,DavisEdmundsUdell}. This suggests that the actual complexity rate
is tight, despite a more complex, and thus difficult to analyze, model.
\end{remark}

\begin{remark} \rm
The bounds  \eqref{eq:compgbound} and \eqref{eq:compKbound} are rather intricate and depend in a complex way on all constants involved. However, it is of particular interest to try to understand how the speedup of the method is influenced by the number of cores used in the computations, all other parameters of the problem being fixed.  
We consider the most common
shared memory architecture and denote by $N_r$ the number of cores.
To perform the analysis we assume that $\gamma$ is ``very small". By this we mean that not only does $\gamma$ satisfy
\eqref{eq:compgbound} but (a) it is such that  variations of $N_r$ will make  \eqref{eq:compgbound} still satisfied by the chosen value of $\gamma$ and (b) the chosen value of $\gamma$  will make all terms involving $\gamma^2$ in 
\eqref{eq:compKbound}  negligible with respect to the other terms. Under these circumstances \eqref{eq:compKbound} reads, for some suitable constant $C_1$,
\[
K_\epsilon \lesssim \frac{1}{\epsilon} \frac{C_1}{\gamma}\] 
 thus implying a linear speedup with the number of cores.  
Note that when $N_r$ increases the value on the right-hand-side of   \eqref{eq:compgbound} will decrease, because 
 we can reasonably assume that  $N_r \approx \delta$, so that if $N_r$ increases $\psi$ and $\psi^\prime$ also increase. Once the right-hand-side of \eqref{eq:compgbound} hits the chosen value of $\gamma$ the analysis above fails and linear speedup is harder to establish.
Therefore we expect that the smaller the chosen $\gamma$  the larger the number of cores for which we can guarantee theoretical linear speedup.
 One should take into account that this type of results is mainly of theoretical interest because
on the one hand  \eqref{eq:compgbound} and \eqref{eq:compKbound} only give  (worst case scenario) upper estimates and, on the other hand, in practice one would prefer to use larger values of $\gamma$. 
\end{remark}
	
An interesting result can be easily derived from Theorem \ref{compl} if we consider a synchronous version of Algorithm 1.
\begin{corollary}
	Consider a synchronous version of Algorithm 1, i.e. $\mathbf{x}^{k-\mathbf{d}}=\mathbf{x}^k\,\forall\,k$, under assumptions A-D, where at each iteration a block is randomly picked to be updated according to a uniform distribution. Choose $\gamma$ such that:
	\begin{equation}
	0<\gamma\leq\min\left\{\frac{(1-\rho^{-1})}{2(1+3L_{\hat{x}}N)};\frac{c_{\tilde{f}}}{L_f}\right\}.
	\label{eq:compgbound2}
	\end{equation}
	Then:
	\begin{equation}\label{eq:compKbound2}
	K_\epsilon\sim\mathcal{O}\left(\frac{N^3}{\epsilon}\right).
	\end{equation}\\	
\end{corollary}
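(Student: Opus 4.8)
The plan is to obtain the corollary as a direct specialization of the master bound in Theorem \ref{compl} to the delay-free, uniformly-sampled regime, and then to track the dependence on $N$. First I would impose the synchronous condition $\mathbf{x}^{k-\mathbf{d}}=\mathbf{x}^k$, i.e. $\mathbf{d}=\mathbf{0}$ at every iteration. Then Assumption (D1) holds with $\delta=0$, so the two sums $\psi=\sum_{t=1}^{\delta}\rho^{t/2}$ and $\psi'=\sum_{t=1}^{\delta}\rho^{t}$ are empty and vanish, $\psi=\psi'=0$. Substituting $\delta=0$ and $\psi=0$ into \eqref{eq:compgbound} collapses the first entry of the minimum to $(1-\rho^{-1})/(2(1+3L_{\hat{x}}N))$ and the second to $c_{\tilde{f}}/L_f$, which is exactly the stepsize restriction \eqref{eq:compgbound2}; and substituting $\delta\psi'=0$ into \eqref{eq:compKbound} annihilates every term carrying that factor, leaving
\[
K_\epsilon\leq \frac{1}{\epsilon}\,\frac{4\,(2+L_B+L_E)}{2\,p_{\min}\,\gamma\,\Delta\,(c_{\tilde{f}}-\gamma L_f)}\,\bigl(F(\mathbf{x}^0)-F^*\bigr).
\]

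Next I would identify the two probabilistic constants in the uniform synchronous model. Since a block is drawn uniformly from $\mathcal{N}=\{1,\dots,N\}$ and the delay is deterministically $\mathbf{0}$, the only index--delay pairs with positive conditional probability are the $(i,\mathbf{0})$, each of probability $1/N$; hence the marginal in Assumption (D2) gives $p_{\min}=1/N$, and the set $\mathcal{V}(\omega)$ of Assumption (D3) reduces to $\{(i,\mathbf{0}):i\in\mathcal{N}\}$, yielding $\Delta=1/N$. All remaining quantities, namely $L_f$, $c_{\tilde{f}}$, $L_B$, $L_E$, $L_{\hat{x}}$, $F(\mathbf{x}^0)-F^*$, and the user-chosen $\rho>1$, are treated as fixed and independent of $N$.

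Finally I would saturate the binding stepsize constraint. For large $N$ the first term in \eqref{eq:compgbound2} is the smaller one, so I take $\gamma=(1-\rho^{-1})/(2(1+3L_{\hat{x}}N))=\Theta(1/N)$; in particular $\gamma\to 0$ as $N\to\infty$, so $c_{\tilde{f}}-\gamma L_f$ stays bounded away from zero (it tends to $c_{\tilde{f}}>0$) and is $\Theta(1)$. Counting powers of $N$ in the displayed bound, the denominator $p_{\min}\,\gamma\,\Delta\,(c_{\tilde{f}}-\gamma L_f)$ scales as $(1/N)\cdot\Theta(1/N)\cdot(1/N)\cdot\Theta(1)=\Theta(1/N^3)$, while the numerator is $\Theta(1)$ in $N$. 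Hence $K_\epsilon\leq(1/\epsilon)\,\Theta(N^3)$, that is $K_\epsilon\sim\mathcal{O}(N^3/\epsilon)$.

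I expect the only delicate point to be the clean identification of $p_{\min}$ and $\Delta$: one must verify that in the synchronous uniform setting $\mathcal{V}(\omega)$ really collapses to $\{(i,\mathbf{0})\}$ so that both constants equal $1/N$, and recognize that the cubic dependence arises from the \emph{three} independent $1/N$ factors (block-selection probability, stepsize, and $\Delta$) stacking in the denominator. Everything else is a routine substitution of $\delta=\psi=\psi'=0$ into Theorem \ref{compl}.
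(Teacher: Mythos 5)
Your proposal is correct and follows exactly the paper's route: substitute $\delta=0$, $\psi=\psi'=0$, and $p_{\min}=\Delta=1/N$ into the bounds of Theorem \ref{compl}. The only thing you add beyond the paper's (very terse) proof is the explicit accounting of the three stacked $1/N$ factors ($p_{\min}$, $\Delta$, and the saturated stepsize $\gamma=\Theta(1/N)$) that produce the $N^3$ dependence, which the paper leaves implicit.
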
	
\begin{proof}
	Since we consider a synchronous scheme, we have $\mathcal{D}=\{\mathbf{0}\}$, which implies $\delta=0$ and $\psi=\psi'=0$. The uniform random block selection implies that $p_\text{min}=\Delta=\frac{1}{N}$. Substituting these values in \eqref{eq:compgbound} and \eqref{eq:compKbound} we get respectively \eqref{eq:compgbound2} and \eqref{eq:compKbound2}. 
\end{proof}
The Corollary states, as expected, that in a synchronous implementation of our algorithm, the iteration complexity does not depend (ideally) on the number of cores running. This of course comes from the fact that in this setting the convergence speed of the algorithm is no longer affected by the use of
 old information $\tilde{\mathbf{x}}^k$.

	\section{Numerical Results}\label{numerical results}
	In this section we test our algorithm on  LASSO problems and  on a nonconvex sparse learning problem and compare it to 
	state-of-art methods.  In particular, we test   the diminishing-stepsize version of the method proposed and studied in 
	the companion paper \cite{companion2}. Results for the fixed-stepsize version of the method whose complexity has been 
	studied in the previous section are not reported. Indeed, as usual for these methods, if the theoretical stepsize 
	\eqref{eq:compgbound} is used, the algorithms simply do not make any practical progress towards optimality  in a 
	reasonable number of iterations. If, on the other hand, we disregard the bound \eqref{eq:compgbound} that, we recall, is 
	an upper bound, and use a ``large'' stepsize (in particular we found the value of  0.95 practically effective), the numerical 
	results are essentially the same as those obtained with the diminishing stepsize version for which we report the results. 
	Since this latter version, as shown in \cite{companion2}, has theoretical convergence guarantee for the chosen stepsize 
	rule, we present the results only for this version of the method.

	\subsection{Implementation } \label{CODES}	
	All tested codes have been written in C++  using  Open- MP. The algorithms were tested on the Purdue's Community 
	Cluster  Snyder on a machine with two 10-Core Intel Xeon-E5 processors (20 cores in total) and 256 GB of RAM.
	
	\noindent$\bullet$\textbf{AsyFLEXA}: We tested Algorithm 1 from \cite{companion2}, always setting $N=n$, i.e. we considered 
	scalar subproblems. These subproblems,  for the choice of $\tilde{f}_i$ that we used and that we specify below for each 
	class of problems, can be solved in closed form using the soft-thresholding operator \cite{beck_teboulle_jis2009}. For the 
	stepsize sequence $\{\gamma^k\}_{k\in\mathbb{N}_+}$  we used the rule 
	$\gamma^{k+1}=\gamma^k(1-\mu\gamma^k)$ with $\gamma^0=1$ and $\mu=10^{-6}$.

		We compared  AsyFLEXA with the following state-of-the-art asynchronous schemes: {A{\scriptsize{SY}}SPCD} and {ARock}. We underline that the theoretical stepsize rules required for the convergence  of {A{\scriptsize{SY}}SPCD} and {ARock} lead to practical non-convergence, since they prescribe extremely small stepsizes. For both algorithms we made several attempts to identify  practical rules that bring the best numerical behaviour, as detailed below. 
		
		\noindent$\bullet$ \textbf{A{\scriptsize{SY}}SPCD}: This is the asynchronous parallel stochastic proximal gradient 
		coordinate descent algorithm for the minimization of convex and nonsmooth objective functions presented in 
		\cite{liu2015asyspcd}. Since the algorithm was observed to perform poorly if the stepsize $\gamma$ is chosen 
		according to \cite[Th. 4.1]{liu2015asyspcd}$-$the  value guaranteeing theoretical  convergence$-$in our experiments 
		we used $\gamma=1$, which violates \cite[Th. 4.1]{liu2015asyspcd} but was effective on our test problems. Note also that this is the value actually used in the numerical tests reported in  \cite{liu2015asyspcd}. We underline that in order to implement the algorithm it is also required to estimate some Lipschitz-like constants.
		\\
		$\bullet$ \textbf{ARock}: ARock \cite{peng2015arock} is an asynchronous parallel algorithm proposed to compute 
		fixed-points of a nonexpansive operator.  Thus it can be  used to solve convex optimization problems.  The algorithm 
		requires a stepsize and 
		the knowledge of the Lipschitz constant of $\nabla f$. Here, again, the use of stepsizes that guarantee theoretical convergence leads to very poor practical performance. In this case we found that the most practical, effective version 
		of the method could be obtained by using for the stepsize  the same rule employed in our AsyFLEXA, with a safeguard that guarantees that the stepsize never becomes smaller than 0.1. 
%
\\

We also tested  the stochastic version of Asynchronous PALM \cite{davis2016asynchronous}, an asynchronous version of the Proximal Alternating Linearized Minimization (PALM) algorithm for the minimization of nonsmooth and nonconvex objective functions. We did not report the results, because  its   practical implementation    (using  unitary stepsize rather than the one guaranteeing convergence, for the reasons explained above) basically coincides with the one of A{\scriptsize{SY}}SPCD.
		
Before reporting the numerical results it is of interest to briefly contrast these algorithms in order to better highlight some interesting properties of AsyFLEXA.

\begin{enumerate}
\item In all tests we  partitioned the variables  among the cores, with only one core per partition.   Therefore, each core is in charge of 
updating its assigned variables and no other core will update those variables. There is consensus in the literature that this configuration 
brings better results experimentally in shared memory architectures, see e.g. \cite{Hogwild!,liu2015asyspcd}. Furthermore, if one is 
considering a message passing architecture, this is  actually the only possible choice.  It is then interesting to note that this choice is fully 
covered by our theory, while the theory of ARock \cite{peng2015arock} requires that each core has access to all variables and therefore can 
not handle it. The theory supporting A{\scriptsize{SY}}SPCD \cite{liu2015asyspcd} requires that at each iteration a variable is updated, with 
all variables selected with {\em equal} probability. The equal probability requirement in the partitioned configuration can not be  
theoretically excluded, but is totally unlikely: it would require that all cpu are identical and that all subproblems require exactly the same 
amount of time  to be processed.  In any case, even not considering the partitioned variables issue, the practical choice for the stepsizes 
adopted are not covered by the theory in \cite{liu2015asyspcd} and \cite{peng2015arock}. We believe this clearly shows our analysis to be 
robust and well matched to  practical computational architectures.
\item It is  of interest to note that we run experiments also on {\em nonconvex} problems. Our algorithm has theoretical convergence
guarantees for nonconvex problems while neither ARock nor A{\scriptsize{SY}}SPCD has any such guarantees.
\item Both A{\scriptsize{SY}}SPCD and ARock (and also PALM) theoretically require, for their  implementation, the estimation of some Lipschitz constants of the problem functions. Although in our test problems the estimations could be reasonably done, this requirement in general can be extremely time consuming if at all possible on other problems. 
\item One last difference worth mentioning is that contrary to AsyFLEXA and ARock, A{\scriptsize{SY}}SPCD and PALM require a memory lock of the variable(s) being updated while the solution of the corresponding  subproblems are computed. In the 
partitioned configuration we adopted this is not an issue, because nobody else can update the variables being updated by a given core. However, when other configurations are chosen this can become a source of delays, especially if  the subproblems are complex and take some time to be solved. This could easily happen, for example, if the subproblems involve more than one variable  or if they are nonconvex, as those that could appear in PALM
\end{enumerate}

We are now ready to illustrate the numerical results.

\subsection{LASSO} \label{LASSO}
	Consider the LASSO problem, i.e. Problem \eqref{ncc_1}, with\\$f(\mathbf{x}) =  \frac{1}{2}\|\mathbf{Ax} - \mathbf{b}\|^2_2$,  $G(\mathbf{x}) = \lambda \cdot\|\mathbf{x}\|_1$, and $\mathcal{X}=\mathbb{R}^n$, with  $\mathbf{A}\in\mathbb{R}^{m\times n}$ and $\mathbf{b}\in\mathbb{R}^{m}$.
	In all implementations of the algorithms,  we precomputed the matrix $\mathbf{A}^\text{T}\mathbf{A}$ and the vector $\mathbf{A}^\text{T}\mathbf{b}$ offline.  In all the experiments,  the starting point of all the algorithms  was set to the zero vector. For  problems in which the optimal value $F^*$ is not known, in order to compute the relative error we estimated $F^*$ by running a synchronous version of AsyFLEXA until variations in the objective value were undetectable.
	
	For AsyFLEXA we set
	$\tilde{f}_i(x_i;\mathbf{x}^{k-\mathbf{d}})=f(x_i;\mathbf{x}^{k-\mathbf{d}}_{-i})+\frac{\tau^k}{2}(x_i-x_i^{k-d_i})^2$, where $\mathbf{x}_{-i}$ denotes the vector obtained from $\mathbf{x}$ by deleting the block $\mathbf{x}_i$. The sequence $\{\tau^k\}_{k\in\mathbb{N}_+}$, shared among all the cores,  was updated   every $n$ iterations, according to the  heuristic proposed for  FLEXA \cite{FLEXA}.
		
		We run tests on LASSO problems generated according to three different methods;  all curves reported  are averaged over five independent random  problem realizations.	
		\\
		
			\noindent \emph{Gaussian problems \cite{liu2015asyspcd}:}   In this setting we generated the LASSO problem according to the procedure used in \cite{liu2015asyspcd}; where the matrix $\mathbf{A}$ has samples taken from a Gaussian $\mathcal{N}(0,1)$ distribution, $\bar{\mathbf{x}}\in\mathbb{R}^n$ is a sparse vector with $s$ nonzeros and $\mathbf{b}=\mathbf{A}\bar{\mathbf{x}}+\mathbf{e}$, with $\mathbf{e}\in\mathbb{R}^m\sim\mathcal{N}(0,\sigma^2)$. In particular we chose $m=20000$, $n=40000$, $s=40$ and $\sigma=0.01$. For the regularization parameter we used the value suggested in \cite{liu2015asyspcd}: $\lambda=20\sqrt{m\log(n)}\sigma$.
			In \figurename~\ref{fig:wrig20k40k} we plot the relative error  on the objective function versus the CPU time, using  2 and 20 cores ($(c)=$ cores). The figure shows that when increasing the number of cores, all the algorithms converge quickly to the solution with comparable performances. In this particular setting, and contrary to what happens in all other problems,   A{\scriptsize{SY}}SPCD has a slightly better behavior than AsyFLEXA but it does not have convergence guarantees. 
			

			In order to quantify the scalability of the algorithms, in \figurename~\ref{fig:sp_wrig20k40k} we plot the  speedup 
			achieved by each of the algorithms versus the number of cores  (of course we run all 
			algorithms also for values between $c=1$ and $c=20$, more precisely for $c= 1, 2, 4, 8, 10, 20$).
		We defined the speedup as the ratio between the runtime on a single core and the runtime on multiple cores.
			The runtimes we used are the CPU times needed to reach a relative error strictly less than $10^{-4}$. The figure 
			shows that all the algorithms obtain a good gain in the performances by increasing the number of cores which is not 
			far from the ideal speedup.
			\begin{figure}[ht]
				\begin{minipage}{0.48\textwidth}
				\centering
				\includegraphics[width = 6cm]{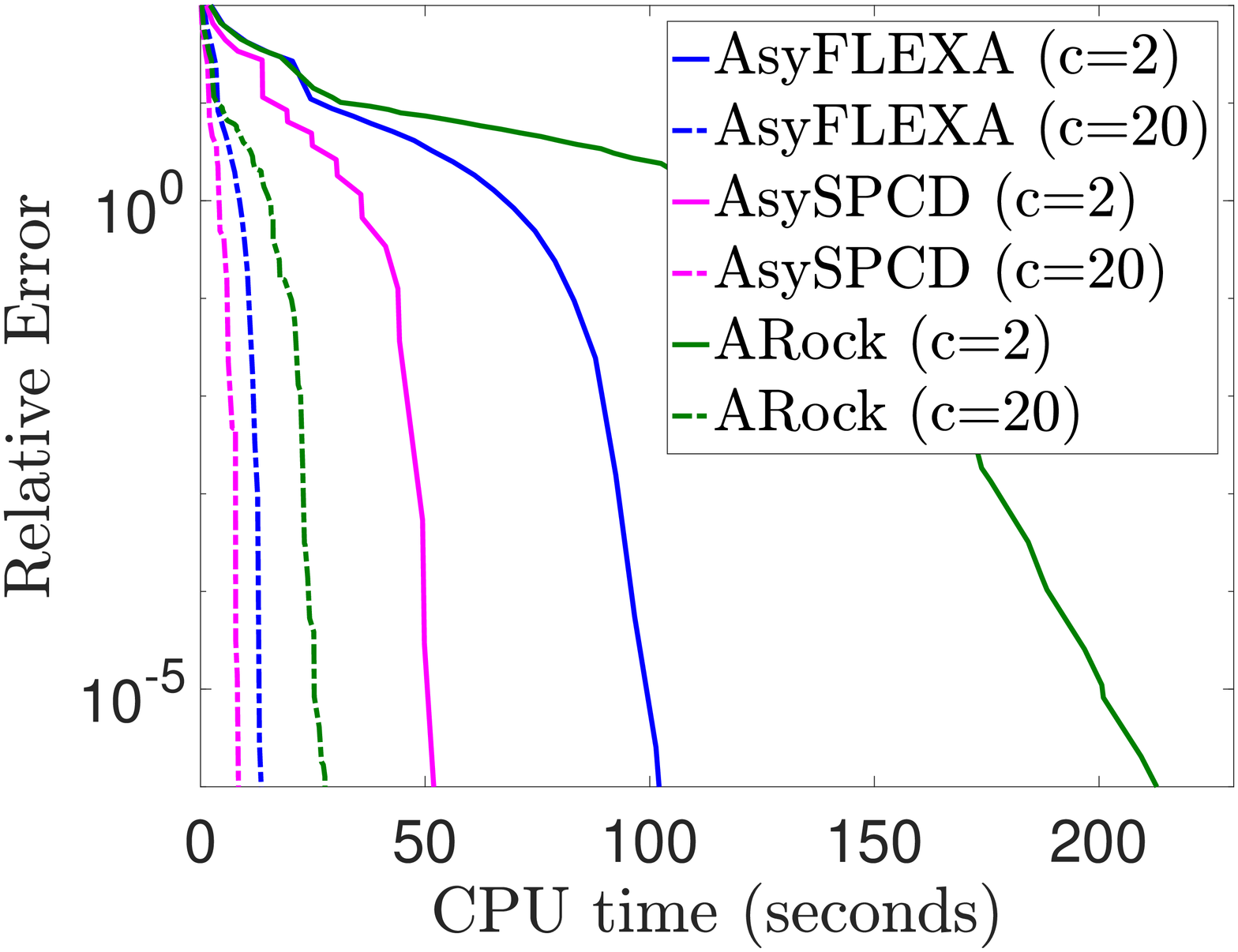}
				\caption{LASSO: comparison in terms of relative error versus CPU time (in seconds) for Liu and  Wright's problems 
				\cite{liu2015asyspcd}}
				\label{fig:wrig20k40k}
\end{minipage}\hfill
\begin {minipage}{0.48\textwidth}
				\centering
				\vspace{-0.4cm}
				\includegraphics[width = 6cm]{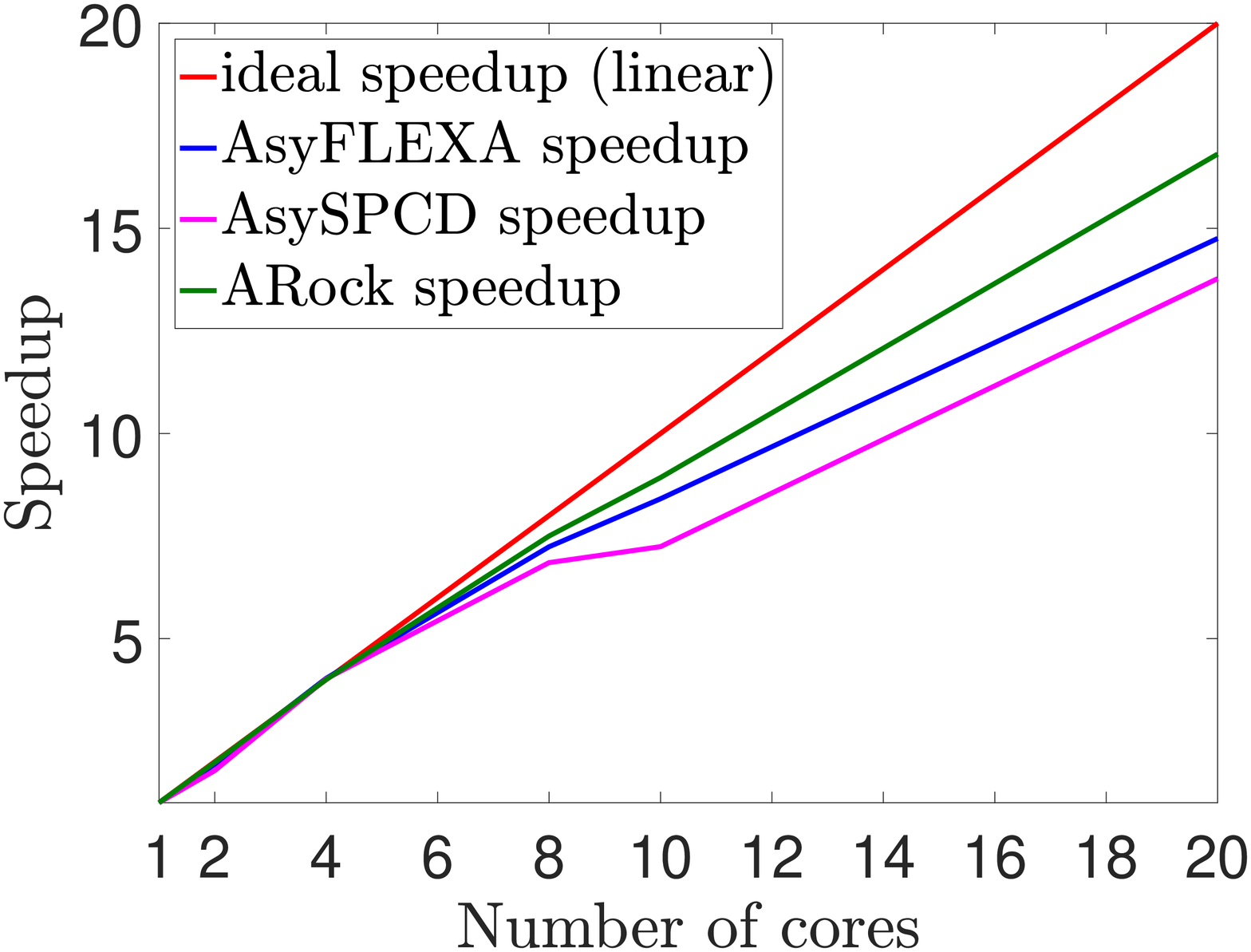}
				\caption{LASSO: speedup of the tested algorithms for Liu and  Wright's problems \cite{liu2015asyspcd}}
				\label{fig:sp_wrig20k40k}
			\end{minipage}
			\end{figure}
		\\
		
		\noindent \emph{Nesterov's problems \cite{nesterov2012gradient}:}   Here we generated a LASSO problem using the 
		random generator proposed by Nesterov in \cite{nesterov2012gradient}, which permits us to control the sparsity of the 
		solution. We considered a problem with $40000$ variables and matrix  $\mathbf{A}$ having $20000$ rows, and set $
		\lambda=1$; the percentage of nonzero in the solution is $1\%$.
		In \figurename~\ref{fig:sp_nest20k40k} we plot the relative error  on the objective function (note that for  Nesterov's 
		model the optimal solution is known) versus the CPU time, using  2 and 20 cores. The figure clearly shows that 
		AsyFLEXA significantly outperforms all the other algorithms on these problems. Moreover, the empirical convergence 
		speed  significantly  
		increases with the number of cores, which  instead  is not observed for the other algorithms, see 
		\figurename~\ref{fig:sp_nest20k40k}, where we only report data for AsyFLEXA, given that the other algorithms do not reach the prefixed   threshold error value of $10^{-4}$ in one hour of computation time.
					\begin{figure}[ht]
						\begin{minipage}{0.48\textwidth}
							\centering
							\includegraphics[width = 6cm]{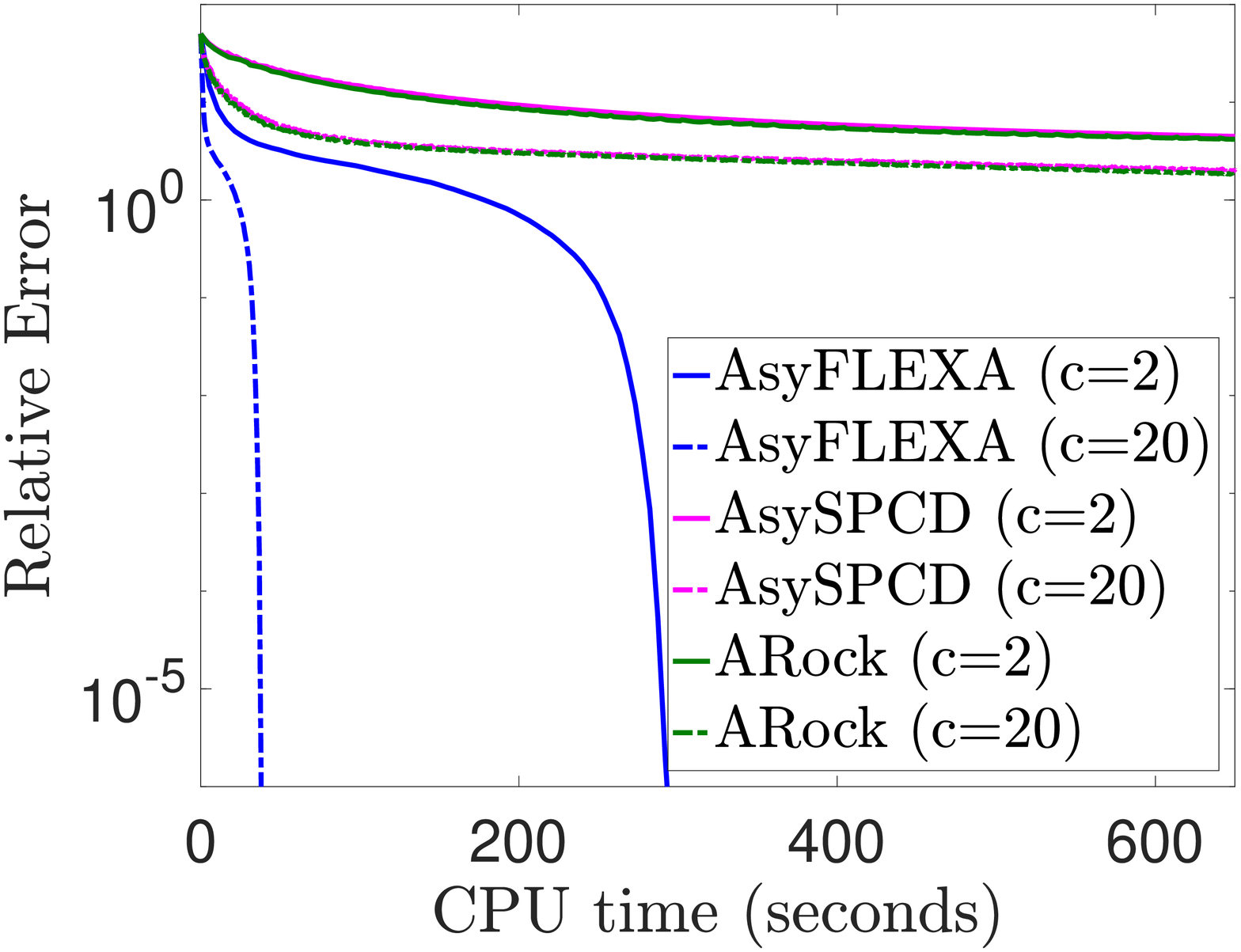}
							\caption{LASSO: comparison in terms of relative error versus CPU time (in seconds) for Nesterov's problems \cite{nesterov2012gradient}}
							\label{fig:nest20k40k}
						\end{minipage}\hfill
						\begin {minipage}{0.48\textwidth}
						\centering
						\vspace{-0.4cm}
						\includegraphics[width = 6cm]{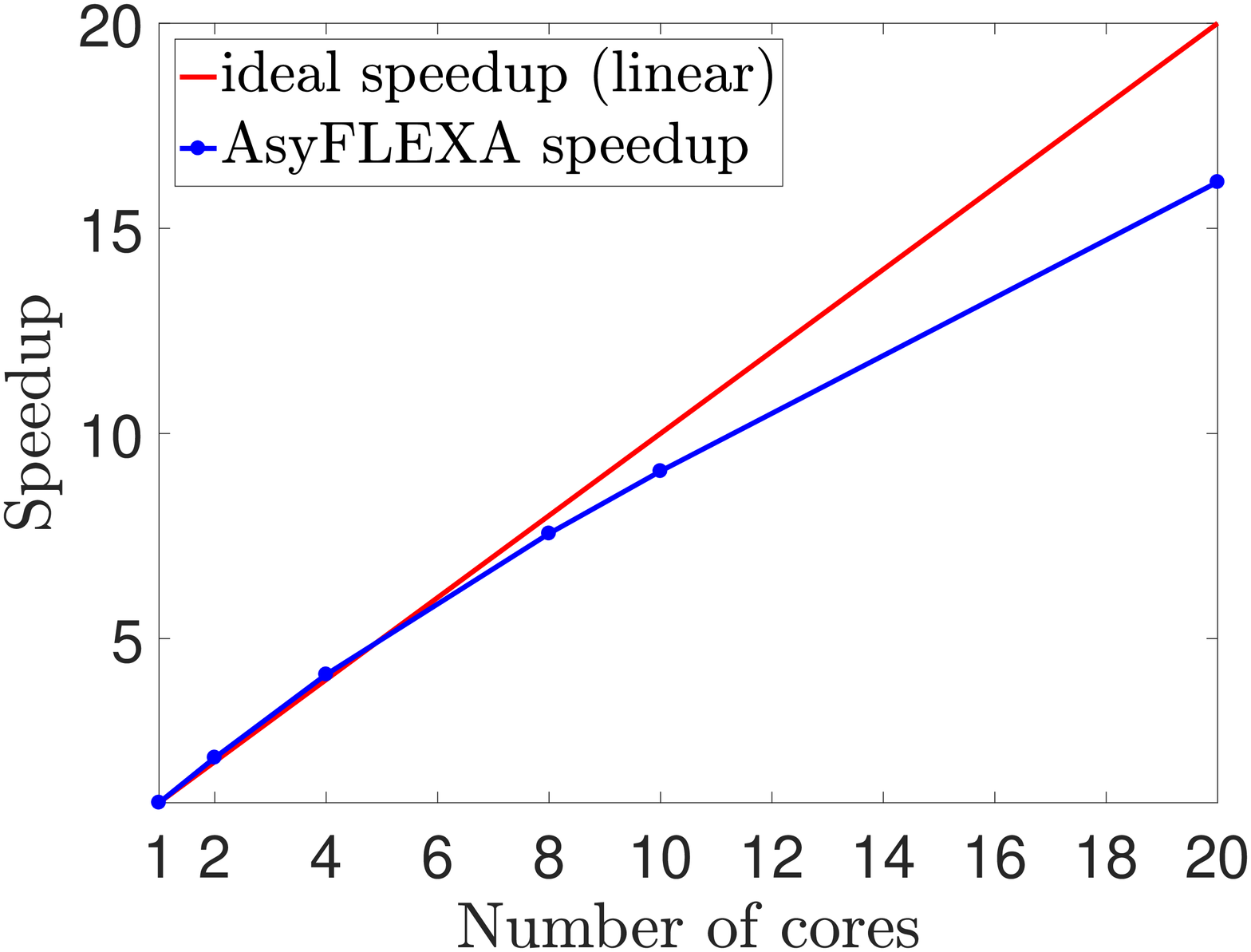}
						\caption{LASSO: speedup of AsyFLEXA for  Nesterov's problems \cite{nesterov2012gradient}}
						\label{fig:sp_nest20k40k}
					\end{minipage}
				\end{figure}
		
		\noindent \emph{Gondzio's problem:} We generated these LASSO problems using the generator proposed by Gondzio in \cite{gondzio2013second}. The key feature of this generator is the possibility of choosing the condition number of $\mathbf{A}^\text{T}\mathbf{A}$, and we set it to $10^4$.  We generated a matrix $\mathbf{A}$ with $2^{14}$ rows and $\lceil 1.01n\rceil$ columns, where the ceiling function $\lceil \cdot\rceil$ returns the smallest integer greater than or equal to its argument. The sparsity in the solution is 0.1\% and we set $\lambda=1$. \figurename~\ref{fig:gondzio16384k2} shows the relative error with respect to the CPU time for the different algorithms we tested, when using 2 and 16 cores.   
		\figurename~\ref{fig:sp_gondzio16384k2} that shows the speedup achieved by our algorithm on these problems
		(in this case we run the algorithm for $c= 1,2,4,8,16$).  
We see that the behavior of the algorithm is qualitatively very similar to the one obtained for the previous set of problems.	
		
\begin{figure}[ht]
	\begin{minipage}{0.48\textwidth}
		\centering
		\includegraphics[width = 6cm]{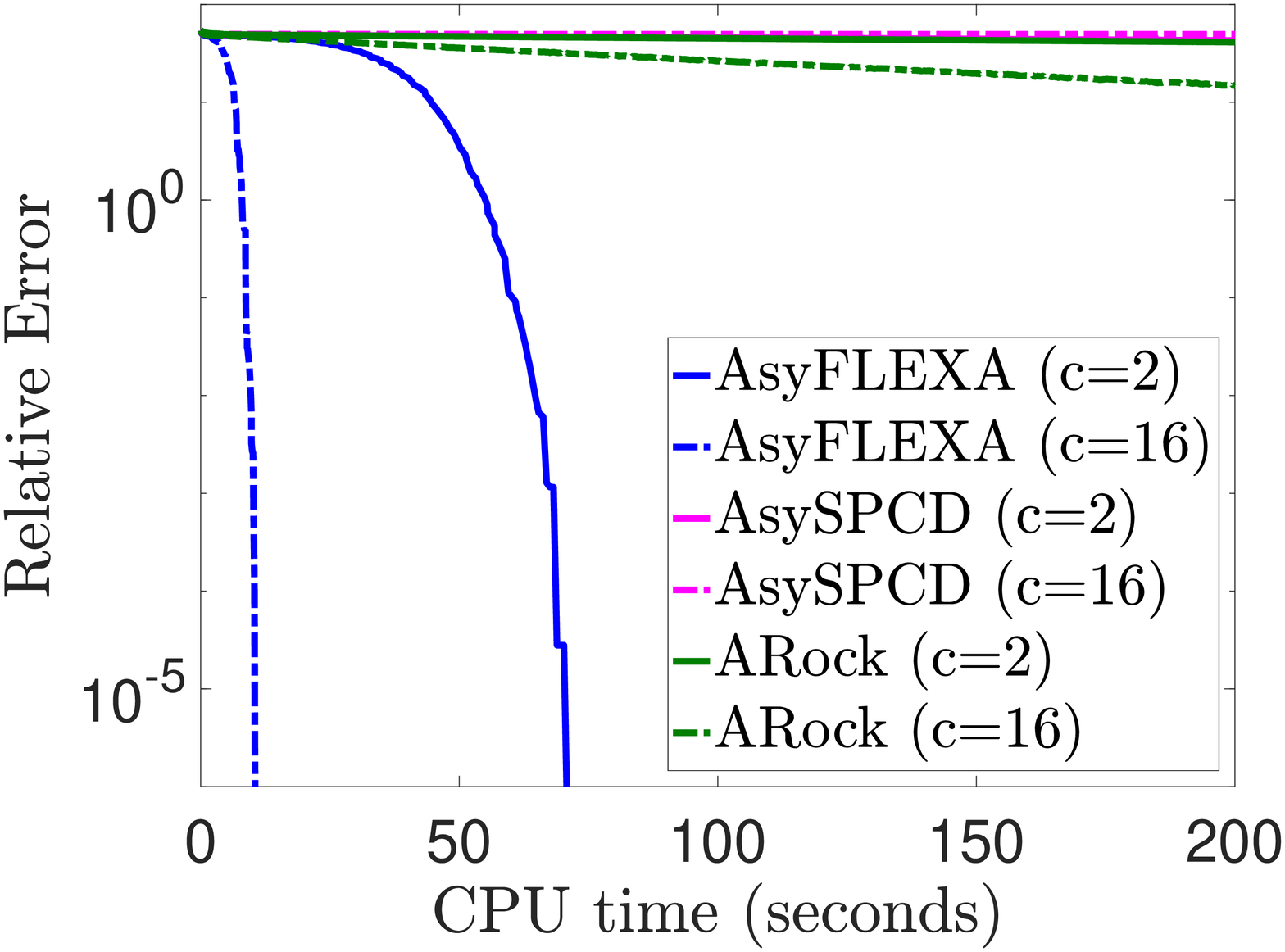}
		\caption{LASSO: comparison in terms of relative error versus CPU time (in seconds) for  Gondzio's problems \cite{gondzio2013second}}
		\label{fig:gondzio16384k2}
	\end{minipage}\hfill
	\begin {minipage}{0.48\textwidth}
	\centering
	\vspace{-0.4cm}
	\includegraphics[width = 6cm]{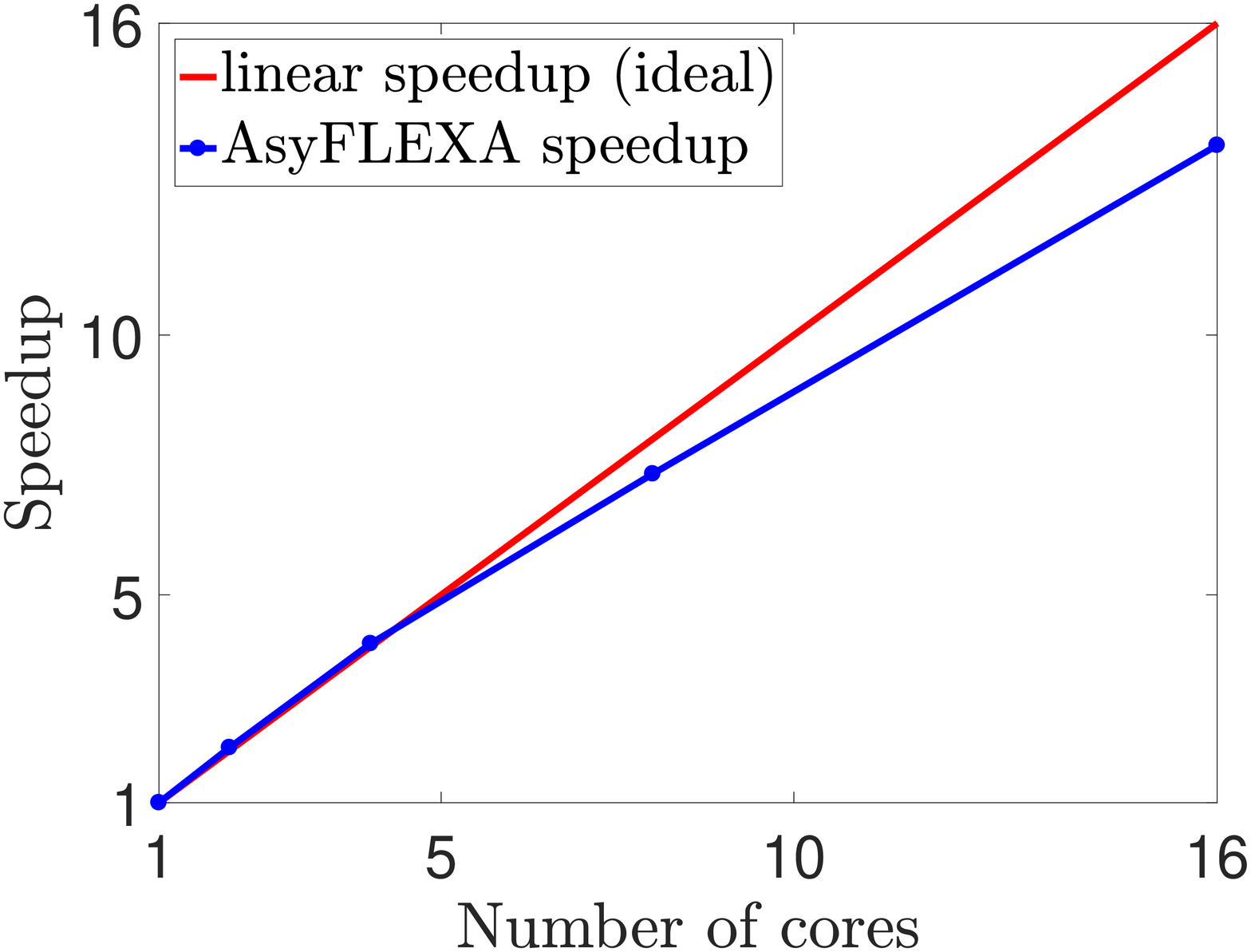}
	\caption{LASSO: speedup of AsyFLEXA for  Gondzio's problems \cite{gondzio2013second}}
	\label{fig:sp_gondzio16384k2}
\end{minipage}
\end{figure}
		
		\subsection{Nonconvex Sparse Learning}\label{subsec:nonconvex_quadratic}\vspace{-0.1cm}
		We consider now the following nonconvex instance of problem  \eqref{ncc_1}:
		\begin{equation}\label{ncc}
		\begin{array}{cl}
		\underset{\mathbf{x}\in\mathbb{R}^n}{\text{minimize}} & F(\mathbf{x})\triangleq \underset{=m(\mathbf{x})}{\underbrace{\|\mathbf{A}\mathbf{x}-\mathbf{b}\|^2_2}}+\lambda H(\mathbf{x}),
		\end{array}
		\end{equation}
		where $\mathbf{A}\in\mathbb{R}^{m\times n}$, $\mathbf{b}\in\mathbb{R}^m$,  $\lambda>0$ and $H(\mathbf{x})$ is a regularizer used to balance the amount of sparsity in the solution. \eqref{ncc} is a standard formulation used for doing regression in order to recover a sparse signal from a noisy observation vector $\mathbf{b}$ \cite{zhang2014minimization}. Common choices for the regularizer $H$ are surrogates of the $l_0$ norm, as the frequently used $l_1$ norm (in this case we recover the LASSO problem of the previous section). However, it is known that nonconvex surrogates of the $l_0$ norm can provide better solutions than the $l_1$ norm, in terms of balance between compression and reconstruction error, see e.g. \cite{le2015dc}. For this reason we tested here two nonconvex regularizer functions, that we already used in \cite{scutari2016parallel}: the exponential one $H(\mathbf{x})=H_\text{exp}(\mathbf{x})=\sum\limits_{i=1}^nh_\text{exp}(x_i)$, with $h_\text{exp}(x_i)=1-e^{-\theta_\text{exp}|x|}$ and $\theta_\text{exp}>0$, and the logarithmic one $H(\mathbf{x})=H_\text{log}(\mathbf{x})=\sum\limits_{i=1}^nh_\text{log}(x_i)$, with $h_\text{log}(x_i)=\frac{\log(1+\theta_\text{log}|x|)}{\log(1+\theta_\text{log})}$ and $\theta_\text{log}>0$.
		Note that Problem \eqref{ncc} does not immediately appear to be in the format needed by our algorithm, i.e. the sum of a smooth term and of a possibly nondifferentiable convex one. But we can put the problem in this form, as explained next.
		In both cases the function $h$ possesses a DC structure that allows us to rewrite it as 
		\begin{equation}
		h(x) = \underset{\triangleq h^+(x)}{\underbrace{\eta(\theta)|x|}}-\underset{\triangleq h^-(x)}{\underbrace{\eta(\theta)|x|-h(x)}},
		\end{equation}   
		with $\eta_\text{exp}(\theta_\text{exp})=\theta_\text{exp}$ and $\eta_\text{log}(\theta_\text{log})=\frac{\theta_\text{log}}{\log(1+\theta_\text{log})}$. 
		It is easily verified that $h^+$ is convex  and that, slightly more surprisingly, $h^-$ is continuously differentiable with a Lipschitz gradient \cite{le2015dc}. Given these facts, it is now easily seen that problem \eqref{ncc} can be put in the required format by setting $f \triangleq h^-$ and $G\triangleq  m+ h^+$. With this in mind,
we can implement  AsyFLEXA to solve problem \eqref{ncc}  by considering scalar blocks ($N=n$) and the following formulation for the best-response $\hat{x}_i$, for $i=1,\ldots,n$:
		\begin{equation}
		\begin{array}{l}
			\hat{x}_i(\mathbf{x}^{k-\mathbf{d}})=\\
			\underset{x_i\in\mathbb{R}}{\text{arg min}}\left\{f(x_i;\mathbf{x}^{k-\mathbf{d}}_{-i})-\lambda\nabla h^-(x_i^{k-d_i})^\text{T}(x-x_i^{k-d_i})+\lambda h^+(x_i)+\frac{\tau^k}{2}(x_i-x_i^{k-d_i})^2\right\},
		\end{array}
		\label{ncc_subpr}
		\end{equation}
		 Note that, once again, the solution of  \eqref{ncc_subpr} can be computed in closed-form through the soft-thresholding operator \cite{beck_teboulle_jis2009}. For the positive constant $\tau^k$ we use again the same heuristic rule
		 used for the LASSO problems. 
		 
In order to generate the 5 random instances of the problem we must specify how we generate the quadratic function $m(\cdot)$. 
In order to favour  	A{\scriptsize{SY}}SPCD and ARock we use the Liu and Wright's generator used  for the first set of LASSO problems discussed above. 	
		In particular, we generated the underlying sparse linear model according to: $\mathbf{b}=\mathbf{A}\bar{\mathbf{x}}+\mathbf{e}$ where $\mathbf{A}$ has 20000 rows (with values normalized to one) and 40000 columns. $\mathbf{A}$, $\bar{\mathbf{x}}$ and $\mathbf{e}$ have i.i.d. elements coming from a Gaussian $\mathcal{N}(0,\sigma^2)$ distribution, with $\sigma=1$ for $\mathbf{A}$ and $\bar{\mathbf{x}}$, and $\sigma=0.1$ for the noise vector $\mathbf{e}$. To impose sparsity on $\bar{\mathbf{x}}$, we randomly set to zero 95\% of its component.
		
		Since \eqref{ncc}  is nonconvex, we compared the performance of the algorithms using the following   distance to  stationarity:  $\|\hat{\mathbf{x}}(\mathbf{x}^k)-\mathbf{x}^k\|_{\infty}$. 	Note that this is a valid stationarity measure: it is continuous and $\|\hat{\mathbf{x}}(\mathbf{x}^\ast)-\mathbf{x}^\ast\|_{\infty}=0$ if and only if  $\mathbf{x}^\ast$ is a stationary solution of \eqref{ncc}.
		In Figure \ref{fig:merit_ncc} we plot the  stationarity measure versus the CPU time, for all the algorithms using 2 and 20 cores, for problem \eqref{ncc} where $H(\mathbf{x})=H_\text{log}(\mathbf{x})$ with $\theta_\text{log}=20$; the curves are averaged over $5$ independent realizations.  All the algorithms were observed to converge to the same stationary solution of (\ref{ncc}) even if, we recall,    A{\scriptsize{SY}}SPCD and ARock have no formal proof of convergence in this nonconvex setting. The figure shows that, even in the nonconvex case,  AsyFLEXA has good performances and actually behaves better that all the other algorithms while being also guaranteed to converge.
				\begin{figure}[ht]
					\centering
					\includegraphics[width = 6cm]{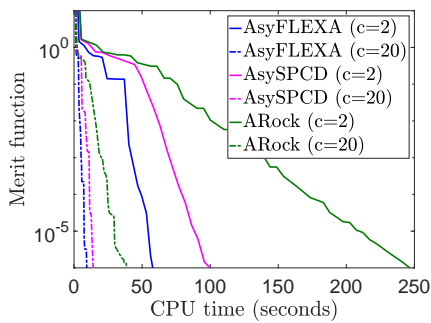}
					\caption{Nonconvex problem: comparison in terms of relative error versus CPU time (in seconds)}
					\label{fig:merit_ncc}
				\end{figure}
As a side issue it is interesting to illustrate the utility of our nonconvex regularizers with respect to the more standard $l_1$ norm regularizer.				
		In \figurename~\ref{fig:err_ncc} and \figurename~\ref{fig:nnz_ncc} we plot respectively the Normalized Mean Square Error (NMSE) (defined as: NMSE$(\mathbf{x})=\|\mathbf{x}-\bar{\mathbf{x}}\|^2_2/\|\bar{\mathbf{x}}\|^2_2$) and the percentage of nonzeros obtained by solving the aforementioned problem with AsyFLEXA for different values of the regularization parameter $\lambda$ and for different surrogates of the $l_0$ norm: the $l_1$ norm, the exponential function ($\theta_\text{exp}=20$) and the logarithmic function ($\theta_\text{log}=20$). AsyFLEXA is terminated when the merit function goes below $10^{-4}$ or after $100n$ iterations. These two figures interestingly show that the two nonconvex regularizers obtain their lowest NMSE for a value of $\lambda$ which corresponds to a good compression result, close to the number of nonzeros of the original signal. On the other side, the $l_1$ norm attains its lowest NMSE by reconstructing the signal with more of 15\% of nonzeros, much more than the original 5\%. In order to get close to the desired number of nonzeros in the reconstruction with the $l_1$ norm, it is necessary to increase the value of $\lambda$, but this leads to a worsening in terms of NMSE of at least two times with respect to the minimum NMSE attainable.
		
		\begin{figure}[ht]
			\begin{minipage}{0.48\textwidth}
				\centering
				\includegraphics[width = 6cm]{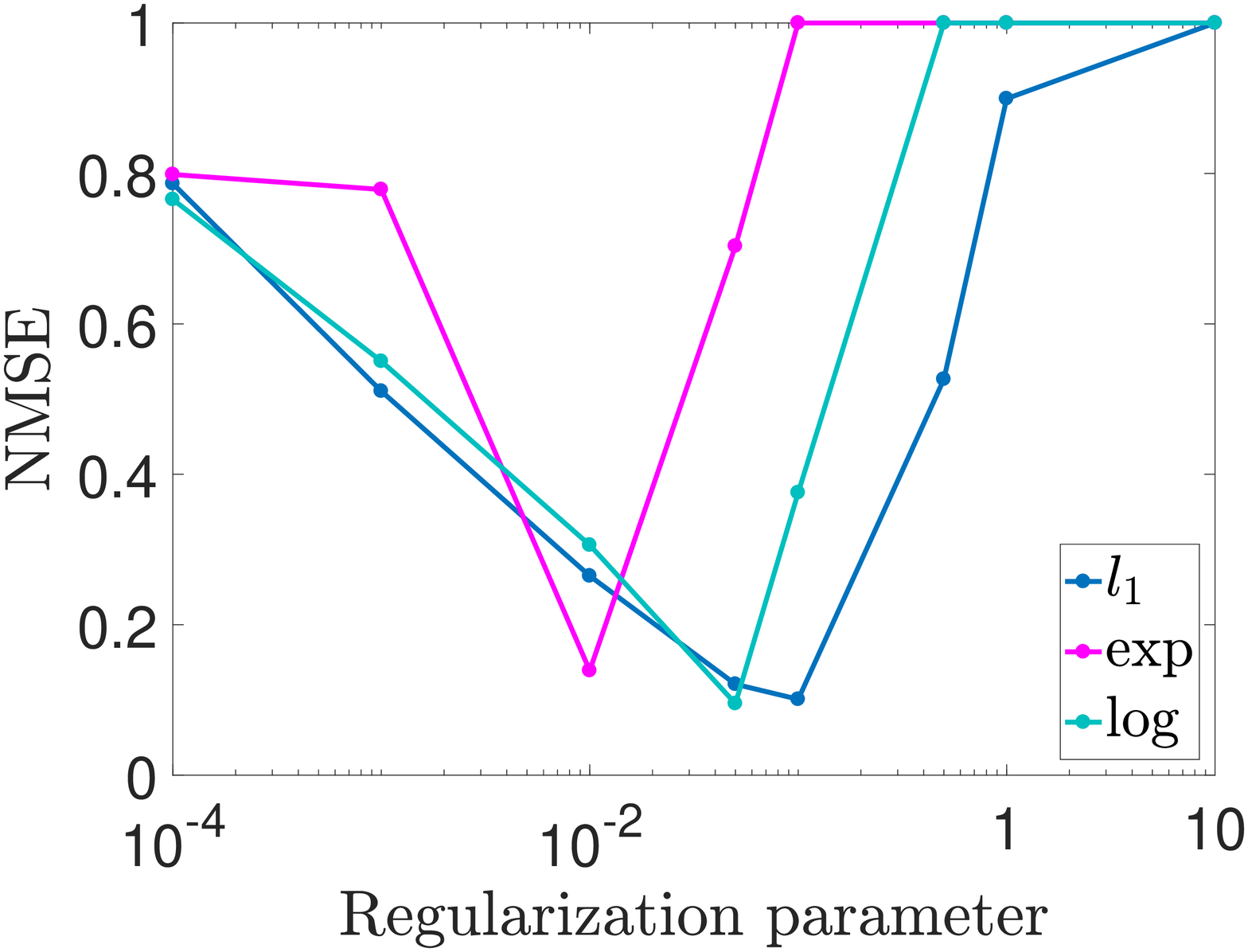}
				\caption{Nonconvex problem: NMSE for different values of the regularization parameter $\lambda$ and for different surrogates of the $l_0$ norm}
				\label{fig:err_ncc}
			\end{minipage}\hfill
			\begin {minipage}{0.48\textwidth}
			\centering
			\vspace{0.2cm}
			\includegraphics[width = 6cm]{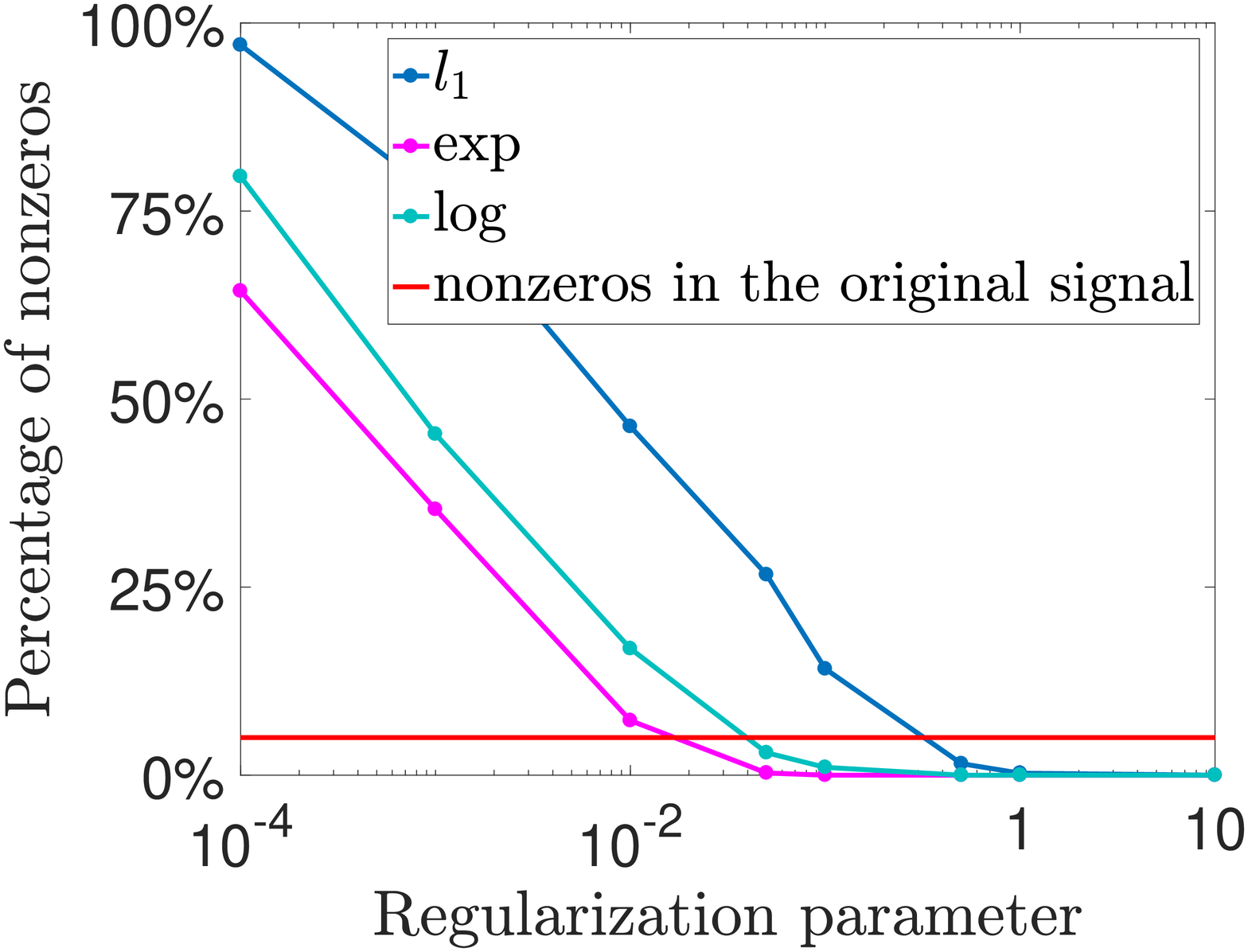}
			\caption{Nonconvex problem: percentage of nonzeros in the solution for different values of the regularization parameter $\lambda$ and for different surrogates of the $l_0$ norm}
			\label{fig:nnz_ncc}
		\end{minipage}
	\end{figure}
	   
\medskip

A full understanding of the numerical behavior of our algorithm  and its comparison with existing alternatives  certainly needs further numerical tests, on both larger and more complex 	 problems. But the substantial results already reported here seem to clearly indicate that our method is reliable and robust and capable to efficiently solve problems that other methods cannot tackle.   The improved performance of AsyFLEXA with respect to A{\scriptsize{SY}}SPCD and ARock on some classes of problems seems due, in our experience, to the much wider flexibility we have in the choice of the model $\tilde f$ which is not linked in any way to Lipschitz constants of the problem functions.  
	   
\section{Conclusions}
Leveraging the new theoretical framework we introduced in the companion paper \cite{companion2}, in this work, we studied   the convergence rate of a novel parallel asynchronous algorithm for the minimization of the sum of a nonconvex smooth function and a convex nonsmooth one subject to nonconvex constraints. Furthermore, we presented numerical results on convex and nonconvex problems showing that our method is widely applicable and   outperforms asynchronous state-of-the-art-schemes in at least 
some classes of important problems.

\section*{Acknowledgments} We very gratefully acknowledge useful discussions with Daniel Raph on stability issues related to Proposition 2.
		
\bibliographystyle{plain}
\addcontentsline{toc}{chapter}{Bibliography}
\bibliography{biblio}

		\section{Appendix}
		\subsection{Preliminaries} \label{subsec:preliminary_results}
		We first introduce some preliminary definitions and results that will be instrumental to prove Theorem \eqref{compl}. Some of the following results have already been discussed in \cite{companion2}, to which we refer the interested reader for more details.
		
		In the rest of the Appendix it will be convenient to use the following notation for the random variables and their realizations:  underlined symbols denote random variables, e.g.,  $\underline{\mathbf{x}}^k$, $\underline{\tilde{\mathbf{x}}}^k$ whereas the same symbols with no underline are the corresponding  realizations, e.g.,  $\mathbf{x}^k\triangleq\underline{\mathbf{x}}^k(\omega)$ and $\tilde{\mathbf{x}}^k\triangleq\underline{\tilde{\mathbf{x}}}^k(\omega)$.
		\begin{asparaenum}		
		\item[\bf 1.  Properties of the best response $\hat{\mathbf{x}}(\bullet)$ and Assumption E.] 
		
		\noindent The following proposition is a direct consequence of  \cite[Lemma 7]{scutari2014distributed}.		
		
		\begin{proposition}\label{Prop_best_response_ncc}
			Given $\hat{\mathbf{x}}(\bullet)$ as defined in \eqref{ncc_2}, under Assumptions A-C  the following holds.
		For any $i \in \mathcal{N}$ and $\mathbf{y} \in \mathcal{K}$,
				\begin{equation}
				(\hat{\mathbf{x}}_i(\mathbf{y}) - \mathbf{y}_i)^T \nabla_{\mathbf{y}_i} f(\mathbf{y}) + g_i(\hat{\mathbf{x}}_i(\mathbf{y})) - g_i(\mathbf{y}_i) \leq - c_{\tilde{f}}\|\hat{\mathbf{x}}_i(\mathbf{y}) - \mathbf{y}_i\|_2^2 \,.
				\end{equation}
		\end{proposition}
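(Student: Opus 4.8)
The plan is to read off the desired inequality from the first-order optimality condition of the strongly convex subproblem \eqref{ncc_2}, using the gradient-consistency property (B2) to translate the surrogate gradient into $\nabla_{\mathbf{y}_i} f$. Fix $i \in \mathcal{N}$ and $\mathbf{y} \in \mathcal{K}$ and abbreviate $\hat{\mathbf{x}}_i \triangleq \hat{\mathbf{x}}_i(\mathbf{y})$. Before anything else I would verify that $\mathbf{y}_i$ is feasible for the subproblem defining $\hat{\mathbf{x}}_i$, i.e. that $\mathbf{y}_i \in \mathcal{K}_i(\mathbf{y}_i)$. Since $\mathbf{y} \in \mathcal{K}$ we have $\mathbf{y}_i \in \mathcal{X}_i$ and $c_{j_i}(\mathbf{y}_i) \leq 0$ for all relevant $j_i$; then (C2) gives $\tilde{c}_{j_i}(\mathbf{y}_i; \mathbf{y}_i) = c_{j_i}(\mathbf{y}_i) \leq 0$, so $\mathbf{y}_i$ meets every constraint defining $\mathcal{K}_i(\mathbf{y}_i)$ in \eqref{ncc_3} and is therefore an admissible test point.

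Next I would write the minimum principle for \eqref{ncc_2}. As $\hat{\mathbf{x}}_i$ minimizes the sum of the differentiable convex $\tilde{f}_i(\cdot; \mathbf{y})$ and the convex $g_i$ over the convex set $\mathcal{K}_i(\mathbf{y}_i)$, optimality yields, for every feasible $\mathbf{z}_i$,
\[
(\mathbf{z}_i - \hat{\mathbf{x}}_i)^T \nabla \tilde{f}_i(\hat{\mathbf{x}}_i; \mathbf{y}) + g_i(\mathbf{z}_i) - g_i(\hat{\mathbf{x}}_i) \geq 0 .
\]
Taking $\mathbf{z}_i = \mathbf{y}_i$ (admissible by the previous paragraph) and rearranging gives
\[
(\hat{\mathbf{x}}_i - \mathbf{y}_i)^T \nabla \tilde{f}_i(\hat{\mathbf{x}}_i; \mathbf{y}) + g_i(\hat{\mathbf{x}}_i) - g_i(\mathbf{y}_i) \leq 0 .
\]

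The final step is to replace the surrogate gradient evaluated at the minimizer, $\nabla \tilde{f}_i(\hat{\mathbf{x}}_i; \mathbf{y})$, by $\nabla \tilde{f}_i(\mathbf{y}_i; \mathbf{y})$, which by (B2) equals $\nabla_{\mathbf{y}_i} f(\mathbf{y})$. I would split $\nabla \tilde{f}_i(\hat{\mathbf{x}}_i; \mathbf{y}) = \nabla \tilde{f}_i(\mathbf{y}_i; \mathbf{y}) + \big[\nabla \tilde{f}_i(\hat{\mathbf{x}}_i; \mathbf{y}) - \nabla \tilde{f}_i(\mathbf{y}_i; \mathbf{y})\big]$ and lower-bound the inner product of $(\hat{\mathbf{x}}_i - \mathbf{y}_i)$ with the bracketed difference using the strong monotonicity of $\nabla \tilde{f}_i(\cdot; \mathbf{y})$---the gradient form of the uniform strong convexity (B1)---which contributes a term $c_{\tilde{f}}\|\hat{\mathbf{x}}_i - \mathbf{y}_i\|_2^2$. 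Feeding this lower bound into the last display and applying (B2) delivers exactly
\[
(\hat{\mathbf{x}}_i - \mathbf{y}_i)^T \nabla_{\mathbf{y}_i} f(\mathbf{y}) + g_i(\hat{\mathbf{x}}_i) - g_i(\mathbf{y}_i) \leq - c_{\tilde{f}}\|\hat{\mathbf{x}}_i - \mathbf{y}_i\|_2^2 .
\]

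There is no genuine obstacle here---this is a standard successive-convex-approximation estimate, which is why it can be cited as a specialization of \cite[Lemma 7]{scutari2014distributed}. The two points that merit care are (i) the admissibility of $\mathbf{y}_i$ for the surrogate constraint set, which rests entirely on (C2), and (ii) invoking the strong-monotonicity form of (B1) rather than the weaker descent-type inequality: the monotonicity route yields the full constant $c_{\tilde{f}}$ appearing in the statement, whereas a naive strong-convexity bound at a single point would only produce $c_{\tilde{f}}/2$.
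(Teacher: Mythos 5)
Your proof is correct: the admissibility of $\mathbf{y}_i$ for $\mathcal{K}_i(\mathbf{y}_i)$ via (C2), the minimum principle at $\hat{\mathbf{x}}_i(\mathbf{y})$ tested against $\mathbf{z}_i=\mathbf{y}_i$, and the strong-monotonicity form of (B1) combined with (B2) give exactly the claimed bound with the full constant $c_{\tilde{f}}$. The paper itself offers no argument here---it simply declares the proposition a direct consequence of an external lemma---and your derivation is precisely the standard one underlying that cited result, so there is nothing to contrast.
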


The  discussion of when Assumption E is satisfied, i.e. of when the Lipschitz continuity of 	$\hat{\mathbf{x}}(\bullet)$ holds,  is in general complex. Fortunately,  for the kind  of problems we are interested in, it simplifies considerably even if a case by case analysis may be needed.  The following proposition shows in the two fundamental cases in which we are interested  that Assumption E either holds automatically or can be guaranteed by suitable constraint qualifications.

\begin{proposition}\label{Prop_best_response_ncc E}
Suppose that Assumptions A-C hold and that $\cal X$ is compact. Then the following two assertions hold.
\begin{description}
\item[\rm (a) ]  If the feasible set of problem  \eqref{ncc_1} is convex (and therefore there are no nonconvex constraints $c$), then 
Assumption E is satisfied.
\item[\rm (b)] Suppose, for simplicity of presentation only, that the sets ${\cal X}_i$ are specified by a finite set of
convex constraints $h_i(\mathbf{x}_i)\leq 0$, with each component of $h_i$ convex and continuously differentiable. Consider problem \eqref{ncc_1}
in the most common case in which $G(\mathbf{x}) = \lambda \| \mathbf{x} \|_1$ for some positive constant $\lambda$. Assume that $\tilde f$ and $\tilde g$ are $C^2$
and    that, for each $\mathbf{y} \in {\cal X}$, problem \eqref{ncc_2} satisfies the Mangasarian-Fromovitz constraints qualification (MFCQ) and the Constant Rank constraint qualification (CRCQ) in $\mathbf{y}$. Then Assumption E is satisfied.
\end{description}
\end{proposition}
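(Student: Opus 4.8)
The plan is to handle the two assertions by quite different means: part (a) reduces to an elementary monotonicity argument because the constraint set is then parameter-independent, whereas part (b) genuinely requires parametric sensitivity theory.

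For (a), note first that when the feasible set of \eqref{ncc_1} is convex there are no nonconvex constraints, so $\mathcal{K}_i(\mathbf{y}_i)$ reduces to the fixed convex set $\mathcal{K}_i$ and the parameter $\mathbf{y}$ enters $\hat{\mathbf{x}}_i(\mathbf{y})$ only through the second argument of $\tilde{f}_i$. I would write the first-order optimality condition for the strongly convex subproblem \eqref{ncc_2} at two parameter vectors $\mathbf{y}$ and $\mathbf{y}'$, namely the variational inequality
\[
\langle \nabla\tilde{f}_i(\hat{\mathbf{x}}_i(\mathbf{y});\mathbf{y}),\,\mathbf{z}-\hat{\mathbf{x}}_i(\mathbf{y})\rangle + g_i(\mathbf{z}) - g_i(\hat{\mathbf{x}}_i(\mathbf{y})) \geq 0 \quad \forall\,\mathbf{z}\in\mathcal{K}_i,
\]
test the inequality at $\mathbf{y}$ with $\mathbf{z}=\hat{\mathbf{x}}_i(\mathbf{y}')$ and the one at $\mathbf{y}'$ with $\mathbf{z}=\hat{\mathbf{x}}_i(\mathbf{y})$, and add them (the $g_i$ terms cancel). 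Inserting and subtracting $\nabla\tilde{f}_i(\hat{\mathbf{x}}_i(\mathbf{y}');\mathbf{y})$ splits the resulting inner product into a strong-monotonicity part, bounded below by $c_{\tilde{f}}\|\hat{\mathbf{x}}_i(\mathbf{y})-\hat{\mathbf{x}}_i(\mathbf{y}')\|_2^2$ via (B1), and a perturbation part, bounded above by $L_B\|\hat{\mathbf{x}}_i(\mathbf{y})-\hat{\mathbf{x}}_i(\mathbf{y}')\|_2\,\|\mathbf{y}-\mathbf{y}'\|_2$ via (B3). Cancelling one factor of $\|\hat{\mathbf{x}}_i(\mathbf{y})-\hat{\mathbf{x}}_i(\mathbf{y}')\|_2$ yields Assumption E with $L_{\hat{x}}=L_B/c_{\tilde{f}}$; this part is routine.

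For (b) the clean argument fails because the feasible set $\mathcal{K}_i(\mathbf{y}_i)$ now varies with the parameter through the convex surrogates $\tilde{c}_{j_i}(\cdot;\mathbf{y}_i)$, and because the objective carries the nonsmooth term $\lambda\|\mathbf{x}_i\|_1$. My plan is first to smooth the problem by the usual epigraphic lift: introduce auxiliary variables $\mathbf{t}$, replace $\lambda\|\mathbf{x}_i\|_1$ by $\lambda\mathbf{1}\trt\mathbf{t}$, and add the linear constraints $-\mathbf{t}\leq\mathbf{x}_i\leq\mathbf{t}$. Together with the smooth convex constraints $h_i(\mathbf{x}_i)\leq 0$ defining $\mathcal{X}_i$ and the $C^2$ surrogates $\tilde{c}_{j_i}$, this produces an equivalent parametric nonlinear program whose data are $C^2$ jointly in the decision variables and the parameter $\mathbf{y}$. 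Strong convexity of $\tilde{f}_i$ (B1) makes the primal minimizer unique, so the solution map is single-valued.

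The heart of (b) is then a parametric-stability result. I would verify that at each $\mathbf{y}\in\mathcal{X}$ the lifted program inherits MFCQ and CRCQ (the added constraints being linear, with constant gradients, preserve both qualifications), and that strong convexity delivers the strong second-order sufficient condition that pins down a locally unique stationary point. The combination of MFCQ and CRCQ is exactly the hypothesis under which the stationary-point map of a $C^2$ parametric program is known to be single-valued and locally Lipschitz in the parameter (the classical Lipschitzian-stability theory of Robinson and of Ralph and Dempe). Finally, since $\mathcal{X}$ is compact and convex, a finite-cover and segment-chaining argument upgrades the uniform local modulus to a global Lipschitz constant $L_{\hat{x}}$ on all of $\mathcal{X}$. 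I expect the main obstacle to lie precisely here: checking that the lift transports MFCQ and CRCQ without loss and that the second-order constant is uniform in $\mathbf{y}$, so that the modulus produced by the sensitivity theorem can be taken independent of the parameter; the sensitivity theorem itself is applied as a black box.
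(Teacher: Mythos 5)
Your proposal is correct and follows essentially the same route as the paper: part (a) is the standard strong-monotonicity/perturbation argument (which the paper simply cites as Proposition 8(a) of the FLEXA reference, with the same constant $L_B/c_{\tilde f}$), and part (b) uses exactly the paper's epigraphical lift of the $\ell_1$ term, preservation of MFCQ and CRCQ under the added linear constraints, the Ralph--Dempe/Robinson parametric stability theory, and compactness of $\mathcal X$ to pass from local to global Lipschitz continuity. No substantive differences to report.
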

\begin{proof}
Case (a) is nothing else but \cite[Proposition 8 (a)]{FLEXA}. Thus, we focus next on  case (b). Because of the compactness of ${\cal X}_i
$ it is enough to show that every $\hat{\mathbf{x}}_i(\bullet)$ is locally Lipschitz. $\hat{\mathbf{x}}_i(\bullet)$  is the 
unique solution of the strongly convex problem \eqref{ncc_2}. This problem can be equivalently rewritten, by adding extra 
variables $ \mathbf{t}\triangleq (t_1, t_2, \ldots, t_{n_i})$,  as\vspace{-0.2cm}
\begin{equation}\label{ncc_2 epi}
\begin{array}{cl}
\underset{\mathbf{x}_i, \mathbf{t}}{\min}
 &
 \tilde f_{i}(\bx_i; {\bx}^k) + \lambda\,(t_1 + \cdots + t_{n_i})\\[0.5em]
   \text{subject to}	 &  \tilde{c}_{j_i}(\mathbf{x}_i;\mathbf{x}^k_i)\leq 0,\quad j_i=m_{i-1}+1,\ldots,m_i\\[0.5em]
	& h_i(\mathbf{x}_i)\leq 0\\[0.5em]
     & -t_\ell \leq (\mathbf{x}_i)_\ell \leq t_\ell, \quad \ell = 1, \ldots, n_i.
\end{array}
\end{equation}

Since problem \eqref{ncc_2} has the unique solution $\hat{\mathbf{x}}_i( {\bx}^k)$, also 
 problem \eqref{ncc_2 epi} has the unique solution  $(\hat{\mathbf{x}}_i( {\bx}^k),\hat t_1, \ldots, \hat t_{n_i}) $, with
 $\hat t_\ell \triangleq | [\hat{\mathbf{x}}_i( {\bx}^k)]_\ell |$.
 Note also that, by the particular structure of the new linear constraints added in \eqref{ncc_2 epi} with respect to \eqref{ncc_2}, and by the assumptions made on problem \eqref{ncc_2}, problem \eqref{ncc_2 epi} satisfies the MFCQ and the CRCQ at its solution. Finally, observe that assumption B1 ensures that the Lagrangian of problem \eqref{ncc_2 epi}
 is positive definite at the solution of this problem. Then, it is easy to see that the Theorem holds by, for example,
 \cite[Theorem 2]{RalphDempe1995} or \cite[Theorem 3.6]{Liu1995}.
  \end{proof}

\begin{remark}\rm
We note that the assumption that $\cal X$ be  compact can always be satisfied in our setting by suitably redefining $\cal X$, if needed. In fact, A5 guarantees that $\cal K$ is compact,and therefore if $\cal X$ is not compact we can simply redefine it by intersecting it with a suitably large ball without changing problem \eqref{ncc_1}
\end{remark}

\begin{remark}\rm
Note that the line of proof used in Proposition \ref{Prop_best_response_ncc E} (b) can be adapted to deal with all cases in 
which the ``epigraphical transformation'' of the problem leads to smooth constraints and   does not destroys the MFCQ 
and CRCQ of the original problem. For example we can cover in this way the case in which the function $G$ represents  a group $\ell_2$ or $\ell_\infty$ regularizations.
\end{remark}		

		\item[\bf 2. Young's Inequality \cite{young1912classes}.] Consider the Young's inequality in the following form:
		\begin{equation}
		\mu_1\mu_2 \leq \frac{1}{2}(\alpha\mu_1^2 + \alpha^{-1}\mu_2^2) \, ,
		\label{eq:prel_3}
		\end{equation}
		for any $\alpha, \mu_1, \mu_2 > 0$.
		
		\item[\bf 3. Further definitions.]  In order to define a $\sigma$-algebra on $\Omega$ we consider, for
		every  $k\geq 0$ and every $\boldsymbol{\omega}^{0:k}\in {\cal N}\times {\cal D}$,
		the  cylinder
		$$C^k(\boldsymbol{\omega}^{0:k}) \triangleq \{\omega\in \Omega: \boldsymbol{\omega}_{0:k} =
		\boldsymbol{\omega}^{0:k}
		\},$$ i.e., $C^k(\boldsymbol{\omega}^{0:k})$ is the subset of $\Omega$ of all elements $\omega$ whose first $k$
		elements are equal to $\boldsymbol{\omega}^0, \ldots \boldsymbol{\omega}^k$. With a little abuse of notation, we indicate by $\boldsymbol{\omega}_k$ the $k$-th element of the sequence $\omega\in \Omega$. Let us now
		denote by ${\cal C}^k$ the set of all possible $C^k(\boldsymbol{\omega}^{0:k})$  when $\boldsymbol{\omega}^t$, $t=0,
		\ldots, k$,  takes all possible values. Denoting by $\sigma\left({\cal C}^k\right)$ the sigma-algebra generated by  ${\cal C}^k$, define for all $k$,
		\begin{equation}\label{eq:sub-sigma}
		{\cal F}^k\triangleq \sigma\left({\cal C}^k\right) \qquad \mbox{\rm and}\qquad
		{\cal F}\triangleq \sigma\left(\cup_{t=0}^\infty{\cal C}^t\right).
		\end{equation}
		We have ${\cal F}^k\subseteq {\cal F}^{k+1} \subseteq {\cal F}$ for all $k$. The latter inclusion is obvious, the former
		derives easily from the fact that any cylinder in ${\cal C}^{k-1}$ can be obtained as a finite union of cylinders in ${\cal C}^k$.
		\\
		
		Finally, let us define the vectors $\mathbf{w}_\mathbf{x}^k$ such $[\mathbf{w}_\mathbf{x}^k]_i=[\hat{\mathbf{x}}
		(\tilde{\mathbf{x}}^k(\mathbf{d}_{j_{i,k}}))]_i$ where $j_{i,k}$ is defined to be
		$j_{i,k} =\argmax_{j:\,(i,\mathbf{d}_j)\in \mathcal{V}(\omega)} \|\hat{\mathbf{x}}_i
		(\tilde{\mathbf{x}}^k(\mathbf{d}_{j}))-\mathbf{x}^k_i\|_2$. If the $\argmax$ is not a singleton, we just pick the
		first index among those satisfying the operation.


\item[\bf 4. Inconsistent read.]
For any given $\omega\in\Omega$, recall that for simplicity of notation  we  define: $\tilde{\mathbf{x}}^k=\mathbf{x}^{k-\mathbf{d}^k}$. Since at each iteration only one block of variables is updated, it is not difficult to see that $\tilde{\mathbf{x}}^k$ can be written as
\begin{equation}
\tilde{\mathbf{x}}^k=\mathbf{x}^k+\sum\limits_{l\in K(\mathbf{d}^k)}(\mathbf{x}^l-\mathbf{x}^{l+1}),
\label{eq:x_tilde}
\end{equation}
where $K(\mathbf{d}^k)\subseteq\{k-\delta,\ldots,k-1\}$ [cf. Assumption D1].
When we need to explicitly specify the dependence of    $\tilde{\mathbf{x}}^k$  on a given realization $\mathbf{d}$ of $\underline{\mathbf{d}}^k$, we will write  $\tilde{\mathbf{x}}^k(\mathbf{d})$.

		\item[\bf 5. Lemma.]
		In order to prove the complexity result we will use the following lemma.
		\begin{lemma}\label{lemma_wright}
			Set $\gamma\leq \frac{(1-\rho^{-1})}{2(1+L_{\hat{x}}N(3+2\psi))}$. Given $\mathbf{x}^0$ and $\omega\in\bar{\Omega}$, the sequence generated by the proposed algorithm under all the previous assumptions satisfies the following condition for any $k\geq0$:
			\begin{equation}
			\|\mathbf{w}_\mathbf{x}^{k-1}-\mathbf{x}^{k-1}\|_2^2\leq\rho\|\mathbf{w}_\mathbf{x}^k-\mathbf{x}^k\|_2^2.
			\end{equation}
		\end{lemma}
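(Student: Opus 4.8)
The plan is to prove the bound by strong induction on $k$, working throughout with the scalar quantities $S_k \triangleq \|\mathbf{w}_\mathbf{x}^k-\mathbf{x}^k\|_2$, so that the claim reads $S_{k-1}^2 \le \rho\, S_k^2$, equivalently $S_{k-1}\le \sqrt{\rho}\,S_k$. The base case, corresponding to the first iterations in which the delay window $K(\mathbf{d})$ of \eqref{eq:x_tilde} is still empty so that $\tilde{\mathbf{x}}^k$ equals the exact iterate, is checked directly. For the inductive step I assume $S_{j-1}\le\sqrt{\rho}\,S_j$ for every $j\le k-1$; iterating this gives $S_{k-t}\le \rho^{(t-1)/2}S_{k-1}$ for all $t\ge 1$, and it is exactly these bounds that will generate the geometric sum $\psi=\sum_{t=1}^{\delta}\rho^{t/2}$.

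First I would decompose the step at iteration $k-1$ block by block. Fix a block $i$ and set $\mathbf{d}\triangleq \mathbf{d}_{j_{i,k-1}}$, the maximizing delay that defines $[\mathbf{w}_\mathbf{x}^{k-1}]_i$; by construction $(i,\mathbf{d})\in\mathcal{V}(\omega)$, and since $\mathcal{V}(\omega)$ does not depend on $k$ (Assumption D3), $\mathbf{d}$ is also admissible at iteration $k$. Inserting $\pm\,\hat{\mathbf{x}}_i(\tilde{\mathbf{x}}^k(\mathbf{d}))$ and $\pm\,\mathbf{x}^k_i$ and using the triangle inequality together with the Lipschitz continuity of $\hat{\mathbf{x}}(\cdot)$ (Assumption E) gives, for each $i$,
\[
\|\hat{\mathbf{x}}_i(\tilde{\mathbf{x}}^{k-1}(\mathbf{d}))-\mathbf{x}^{k-1}_i\|_2 \le \|\hat{\mathbf{x}}_i(\tilde{\mathbf{x}}^k(\mathbf{d}))-\mathbf{x}^k_i\|_2 + L_{\hat{x}}\|\tilde{\mathbf{x}}^{k-1}(\mathbf{d})-\tilde{\mathbf{x}}^k(\mathbf{d})\|_2 + \|\mathbf{x}^k_i-\mathbf{x}^{k-1}_i\|_2 .
\]
The first term is at most $\|[\mathbf{w}_\mathbf{x}^k]_i-\mathbf{x}^k_i\|_2$, because $\mathbf{w}_\mathbf{x}^k$ is defined block-wise as the \emph{largest} admissible best-response step and $\mathbf{d}$ is admissible at $k$. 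Squaring, summing over $i$, and applying Minkowski's inequality then yields $S_{k-1}\le S_k + \|\mathbf{c}\|_2 + \|\mathbf{x}^k-\mathbf{x}^{k-1}\|_2$, where $\mathbf{c}$ collects the Lipschitz read-error terms.

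Next I would bound the two error contributions. The displacement $\|\mathbf{x}^k-\mathbf{x}^{k-1}\|_2$ equals $\gamma$ times the norm of the single realized best-response step at iteration $k-1$, hence is at most $\gamma\,S_{k-1}$. For the read-error term I would invoke the inconsistent-read representation \eqref{eq:x_tilde}: both $\tilde{\mathbf{x}}^{k-1}(\mathbf{d})$ and $\tilde{\mathbf{x}}^k(\mathbf{d})$ are the current iterate plus a sum of single-block increments $\mathbf{x}^l-\mathbf{x}^{l+1}$ with $l$ in the window $\{k-1-\delta,\ldots,k-1\}$, so their difference is again a sum of such increments, each of norm $\gamma$ times a best-response step at some $l$, thus at most $\gamma\,S_l$. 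Replacing every $S_l=S_{k-t}$ by $\rho^{(t-1)/2}S_{k-1}$ via the induction hypothesis produces the factor $\psi$, while the nested summations (over the outer blocks $i$ and over the blocks of each delayed read) account for the power of $N$. Collecting terms this step delivers a bound of the form
\[
S_{k-1}\le S_k + \gamma\bigl(1+L_{\hat{x}}N(3+2\psi)\bigr)\,S_{k-1}.
\]

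Finally I would close the recursion. Writing $C\triangleq 1+L_{\hat{x}}N(3+2\psi)$ and rearranging gives $(1-\gamma C)\,S_{k-1}\le S_k$. The stepsize bound guarantees $\gamma C\le (1-\rho^{-1})/2$, and since $(1-\rho^{-1})/2=\tfrac12(1-\rho^{-1/2})(1+\rho^{-1/2})\le 1-\rho^{-1/2}$ (because $\rho>1$ forces $1+\rho^{-1/2}\le 2$), we obtain $1-\gamma C\ge \rho^{-1/2}>0$; therefore $S_{k-1}\le \rho^{1/2}S_k$, i.e. $S_{k-1}^2\le \rho\,S_k^2$, which completes the induction. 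I expect the main obstacle to be the bookkeeping in the third step: because the maximizing delays $\mathbf{d}_{j_{i,k-1}}$ vary with the block $i$ and the index sets $K(\mathbf{d})$ in \eqref{eq:x_tilde} shift by one when passing from $k-1$ to $k$, one must count the surviving single-block increments carefully, and without double counting, to pin down exactly the constant $3+2\psi$ and the correct power of $N$ that match the stepsize condition, rather than a looser estimate that would shrink the admissible range of $\gamma$.
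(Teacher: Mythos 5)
Your proof is correct and rests on the same foundation as the paper's: a strong induction driven by the contraction factor $\rho$, the Lipschitz continuity of $\hat{\mathbf{x}}(\cdot)$ from Assumption E, the inconsistent-read decomposition \eqref{eq:x_tilde} into at most $\delta$ single-block increments each bounded by $\gamma\|\mathbf{w}_\mathbf{x}^l-\mathbf{x}^l\|_2$, and the induction hypothesis to turn those increments into the geometric sum $\psi$. Where you differ is purely in the algebraic execution: the paper works with squared norms throughout, starting from the identity $\|\mathbf{a}\|_2^2-\|\mathbf{b}\|_2^2\leq 2\|\mathbf{a}\|_2\|\mathbf{b}-\mathbf{a}\|_2$ and then deploying Young's inequality with the tuned weights $\alpha_l=\rho^{(k-l-1)/2}$ to convert the cross terms back into squares, which is exactly what produces the constant $1+L_{\hat{x}}N(3+2\psi)$ and the condition $1-2\gamma(1+L_{\hat{x}}N(3+2\psi))\geq\rho^{-1}$. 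You instead stay at the level of unsquared norms, aggregate the blockwise triangle inequalities by Minkowski, and only square at the end, closing the recursion via $1-\gamma C\geq\rho^{-1/2}$, which you correctly verify is implied by the stated stepsize bound since $(1-\rho^{-1})/2\leq 1-\rho^{-1/2}$ for $\rho>1$. This buys you a cleaner argument with no Young's inequality and, as you suspect, a somewhat smaller constant in place of $3+2\psi$ (your decomposition uses the same delay vector $\mathbf{d}_{j_{i,k-1}}$ at both iterations and bounds the resulting term by the maximum defining $[\mathbf{w}_\mathbf{x}^k]_i$, avoiding the paper's doubling in \eqref{eq:a23}); since a smaller constant only weakens the requirement on $\gamma$, the lemma as stated still follows. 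The one place you should be more explicit is the base case, which you only assert is "checked directly": the paper devotes \eqref{ncc_new1}--\eqref{eq:step_cond1} to it, using that $K(\mathbf{d}^0)=\emptyset$ and $K(\mathbf{d}^1)\subseteq\{0\}$, and your general inductive step does legitimately specialize to it, but that specialization deserves a line.
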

		\begin{proof}
			The proof is done by induction and parallels, to some extent, a similar one  presented in \cite{liu2015asyspcd}.\\
			Let us consider a given $\omega\in\bar{\Omega}$ and $k\geq0$.\\
			We start by relying on the following trivial implication, which holds true for any two vectors $\mathbf{a}, \mathbf{b} \in \mathbb{R}^n$:
			\begin{equation}
			\|\mathbf{a} - \mathbf{b}\|_2^2 = \|\mathbf{a}\|_2^2 + \|\mathbf{b}\|_2^2 - 2\mathbf{a}^T\mathbf{b} \geq 0 \Longrightarrow \|\mathbf{a}\|_2^2 + \|\mathbf{b}\|_2^2 \geq 2\mathbf{a}^T\mathbf{b},
			\end{equation}
			that, combined with Cauchy-Schwartz inequality, leads to: 
\begin{align}
	&\nonumber		\|\mathbf{a}\|_2^2-\|\mathbf{b}\|_2^2 	
			= 2\|\mathbf{a}\|_2^2-(\|\mathbf{a}\|_2^2+\|\mathbf{b}\|_2^2)	
			\leq 2\|\mathbf{a}\|_2^2-2\mathbf{a}^\text{T}\mathbf{b} \\	
		&	=2\mathbf{a}^\text{T}(\mathbf{a}-\mathbf{b}) 	
			\leq 2\|\mathbf{a}\|_2\|\mathbf{b}-\mathbf{a}\|_2.
			\label{eq:l01}
			\end{align}
			The following holds:
			\begin{equation}
			\def\arraystretch{2}\begin{array}{l}
			
			\|\gamma(\mathbf{w}_\mathbf{x}^{k-1}-\mathbf{x}^{k-1})\|_2^2-\|\gamma(\mathbf{w}_\mathbf{x}^k - \mathbf{x}^k)\|_2^2\\
			
			\stackrel{\text{(a)}}{\leq} 2\|\gamma(\mathbf{w}_\mathbf{x}^{k-1}-\mathbf{x}^{k-1})\|_2\|\gamma(\mathbf{w}_\mathbf{x}^k - \mathbf{x}^k)-\gamma(\mathbf{w}_\mathbf{x}^{k-1}-\mathbf{x}^{k-1})\|_2\\
			
			\stackrel{\text{(b)}}{\leq}2\|\gamma(\mathbf{w}_\mathbf{x}^{k-1}-\mathbf{x}^{k-1})\|_2(\gamma\|\mathbf{x}^k-\mathbf{x}^{k-1}\|_2+\gamma L_{\hat{x}}\sum\limits_{i=1}^{N}\|\tilde{\mathbf{x}}^k(\mathbf{d}_{j_{i,k}})-\tilde{\mathbf{x}}^{k-1}(\mathbf{d}_{j_{i,k-1}})\|_2)\\
			
			\leq2\|\gamma(\mathbf{w}_\mathbf{x}^{k-1}-\mathbf{x}^{k-1})\|_2(\gamma\|\mathbf{x}^k-\mathbf{x}^{k-1}\|_2+\gamma L_{\hat{x}}N\|\mathbf{x}^k-\mathbf{x}^{k-1}\|_2\\+\gamma L_{\hat{x}}\sum\limits_{i=1}^{N}(\|\mathbf{x}^k-\tilde{\mathbf{x}}^k(\mathbf{d}_{j_{i,k}})\|_2+\|\mathbf{x}^{k-1}-\tilde{\mathbf{x}}^{k-1}(\mathbf{d}_{j_{i,k-1}})\|_2))
			\end{array}
			\label{eq:l02}
			\end{equation}
			where in (a) we used \eqref{eq:l01} and (b) comes from Assumption E.
			We will prove the Lemma by induction, so let us analyze what happens for $k=1$. \eqref{eq:l02} simply becomes:
			\begin{align}
	&\nonumber
			\|\gamma(\mathbf{w}_\mathbf{x}^0-\mathbf{x}^0)\|_2^2-\|\gamma(\mathbf{w}_\mathbf{x}^1-\mathbf{x}^1)\|_2^2
			\leq2\|\gamma(\mathbf{w}_\mathbf{x}^0-\mathbf{x}^0)\|_2((1+L_{\hat{x}}N)\gamma\|\mathbf{x}^1-\mathbf{x}^0\|_2\\
			&+\gamma L_{\hat{x}}\sum\limits_{i=1}^{N}(\|\mathbf{x}^1-\tilde{\mathbf{x}}^1(\mathbf{d}_{j_{i,1}})\|_2+\|\mathbf{x}^0-\tilde{\mathbf{x}}^0(\mathbf{d}_{j_{i,0}})\|_2))
			\label{ncc_new1}		
			\end{align}
			For $k=1$ and for any $j=1,\ldots,|\mathcal{V}(\omega)|$, we can bound the terms in \eqref{ncc_new1} as:
			\begin{align}
			&\nonumber 
			\|\mathbf{x}^1-\tilde{\mathbf{x}}^1(\mathbf{d}_{j_{i,1}})\|_2+\|\mathbf{x}^0-\tilde{\mathbf{x}}^0(\mathbf{d}_{j_{i,0}})\|_2
			\stackrel{\text{(a)}}{=}\|\mathbf{x}^1-\tilde{\mathbf{x}}^1(\mathbf{d}_{j_{i,1}})\|_2+\|\mathbf{x}^0-\mathbf{x}^0\|_2
\\
			&			\leq \|\mathbf{x}^1-\mathbf{x}^0\|_2,
			\label{eq:a23bis}
			\end{align}	
			where (a) comes from \eqref{eq:x_tilde} and the fact that $K(\mathbf{d}^0)=\emptyset$ for any $\mathbf{d}^0\in\mathcal{D}$ and $K(\mathbf{d}^1)\subseteq\{0\}$ for any $\mathbf{d}^1\in\mathcal{D}$ (see the definition of the set $K(\mathbf{d}^k)$ in Section \ref{subsec:preliminary_results}).\\
			Substituting \eqref{eq:a23bis} in \eqref{ncc_new1}, we have:
			\begin{align}
&\nonumber		
			\|\gamma(\mathbf{w}_\mathbf{x}^0 - \mathbf{x}^0)\|_2^2-\|\gamma(\mathbf{w}_\mathbf{x}^1-\mathbf{x}^1)\|_2^2
			\;\leq\; 2\gamma(1+L_{\hat{x}}N)\|\gamma(\mathbf{w}_\mathbf{x}^0-\mathbf{x}^0)\|_2\|\mathbf{x}^1-\mathbf{x}^0\|_2\\
			&\nonumber	+2\gamma L_{\hat{x}}N\|\gamma(\mathbf{w}_\mathbf{x}^0-\mathbf{x}^0)\|_2\|\mathbf{x}^1-\mathbf{x}^0\|_2
					\;=\; 2\gamma(1+2L_{\hat{x}}N)\|\gamma(\mathbf{w}_\mathbf{x}^0-\mathbf{x}^0)\|_2\|\mathbf{x}^1-\mathbf{x}^0\|_2\\			
	&		\label{eq:base_1}
			\stackrel{(a)}{\leq}\gamma(1+2L_{\hat{x}}N)(\|\gamma(\mathbf{w}_\mathbf{x}^0-\mathbf{x}^0)\|_2^2+\|\mathbf{x}^1-\mathbf{x}^0\|_2^2)\\			
	&\nonumber		=\gamma(1+2L_{\hat{x}}N)(\|\gamma(\mathbf{w}_\mathbf{x}^0-\mathbf{x}^0)\|_2^2+\|\gamma(\hat{\mathbf{x}}_{i^0}(\tilde{\mathbf{x}}^0(\mathbf{d}^0))-\mathbf{x}_{i^0}^0)\|_2^2)\\			
	&\nonumber		\leq 2\gamma(1+2L_{\hat{x}}N)\|\gamma(\mathbf{w}_\mathbf{x}^0 - \mathbf{x}^0)\|_2^2,
			\end{align}
			where (a) follows from the Young's inequality with $\alpha=1$ and the last inequality follows from the
definition of $\mathbf{w}_\mathbf{x}$  in Section \ref{subsec:preliminary_results}. We can now derive the base of the induction  we were seeking, just rearranging the terms in \eqref{eq:base_1}:
			\begin{align}
			\|\gamma(\mathbf{w}_\mathbf{x}^0 - \mathbf{x}^0)\|_2^2
			\leq  (1-2\gamma(1+2L_{\hat{x}}N) )^{-1}\|\gamma(\mathbf{w}_\mathbf{x}^1-\mathbf{x}^1)\|_2^2
			\leq \rho\|\gamma(\mathbf{w}_\mathbf{x}^1-\mathbf{x}^1)\|_2^2.
			\label{eq:base_2}
			\end{align}
			The steps in \eqref{eq:base_2} are valid only if, for some $\rho > 1$:
			\begin{equation}
			\gamma\leq \frac{(1-\rho^{-1})}{2(1+2L_{\hat{x}}N)}.
			\label{eq:step_cond1}
			\end{equation}
			In this way we proved the base of the induction. Now we start again from \eqref{eq:l02} in order to finish the proof of the Lemma. Let us search a bound for the following quantity for any $j=1,\ldots,|\mathcal{V}(\omega)|$:
			\begin{equation}
			\def\arraystretch{2}\begin{array}{l}
			\|\mathbf{x}^k-\tilde{\mathbf{x}}^k(\mathbf{d}_{j_{i,k}})\|_2+\|\mathbf{x}^{k-1}-\tilde{\mathbf{x}}^{k-1}(\mathbf{d}_{j_{i,k-1}})\|_2\\
			
			\stackrel{\text{(a)}}{=}\|\sum\limits_{l\in K^k(\mathbf{d}_j)}(\mathbf{x}^{l+1}-\mathbf{x}^l)\|_2+\|\sum\limits_{l\in K^{k-1}(\mathbf{d}_j)}(\mathbf{x}^{l+1}-\mathbf{x}^l)\|_2 \\
			
			\leq\sum\limits_{l\in K^k(\mathbf{d}_{j_{i,k}})}\|\mathbf{x}^{l+1}-\mathbf{x}^l\|_2+\sum\limits_{l\in K^{k-1}(\mathbf{d}_{j_{i,k-1}})}\|\mathbf{x}^{l+1}-\mathbf{x}^l\|_2 \\
			
			\stackrel{\text{(b)}}{\leq}\sum\limits_{l=k-\delta}^{k-1}\|\mathbf{x}^{l+1}-\mathbf{x}^l\|_2+\sum\limits_{l=k-1-\delta}^{k-2}\|\mathbf{x}^{l+1}-\mathbf{x}^l\|_2\\
			
			\leq 2\sum\limits_{l=k-1-\delta}^{k-1}\|\mathbf{x}^{l+1}-\mathbf{x}^l\|_2\;
			=\; 2\sum\limits_{l=k-1-\delta}^{k-1}\|\mathbf{x}^{l+1}_{i^l}-\mathbf{x}^l_{i^l}\|_2
			\end{array}
			\label{eq:a23}
			\end{equation}
			where (a) comes from \eqref{eq:x_tilde} and (b) from D1. Plugging this last result into \eqref{eq:l02}:
			\begin{equation}
			\def\arraystretch{2}\begin{array}{l}
			\|\gamma(\mathbf{w}_\mathbf{x}^{k-1}-\mathbf{x}^{k-1})\|_2^2-\|\gamma(\mathbf{w}_\mathbf{x}^k - \mathbf{x}^k)\|_2^2\\
			
			\leq 2\gamma(1+L_{\hat{x}}N)\|\gamma(\mathbf{w}_\mathbf{x}^{k-1}-\mathbf{x}^{k-1})\|_2\|\mathbf{x}^k-\mathbf{x}^{k-1}\|_2\\+4\gamma L_{\hat{x}}N\|\gamma(\mathbf{w}_\mathbf{x}^{k-1}-\mathbf{x}^{k-1})\|_2\sum\limits_{l=k-1-\delta}^{k-1}\|\mathbf{x}^{l+1}_{i^l}-\mathbf{x}^l_{i^l}\|_2\\
			
			= 2\gamma(1+3L_{\hat{x}}N)\|\gamma(\mathbf{w}_\mathbf{x}^{k-1}-\mathbf{x}^{k-1})\|_2\|\mathbf{x}^k-\mathbf{x}^{k-1}\|_2\\+4\gamma  L_{\hat{x}}N\|\gamma(\mathbf{w}_\mathbf{x}^{k-1}-\mathbf{x}^{k-1})\|_2\sum\limits_{l=k-1-\delta}^{k-2}\|\mathbf{x}^{l+1}_{i^l}-\mathbf{x}^l_{i^l}\|_2.
			\end{array}
			\label{eq:a24}
			\end{equation}
			By using Young's inequality with $\alpha_l>0$ for any $l$, we get:
			\begin{align}
		&\nonumber
			\|\mathbf{x}^{l+1}_{i^l} - \mathbf{x}^l_{i^l}\|_2\|\gamma(\mathbf{w}_\mathbf{x}^{k-1}-\mathbf{x}^{k-1})\|_2
			\;\leq \;\frac{1}{2}(\alpha_l\|\mathbf{x}^{l+1}_{i^l} - \mathbf{x}^l_{i^l}\|_2^2 + \alpha_l^{-1}\|\gamma(\mathbf{w}_\mathbf{x}^{k-1}-\mathbf{x}^{k-1})\|_2^2)\\			
&		\label{eq:l26b}		=\frac{1}{2}(\alpha_l\|\gamma(\hat{\mathbf{x}}_{i^l}(\tilde{\mathbf{x}}^l(\mathbf{d}^l)) - \mathbf{x}_{i^l}^l)\|_2^2 + \alpha_l^{-1}\|\gamma(\mathbf{w}_\mathbf{x}^{k-1}-\mathbf{x}^{k-1})\|_2^2)\\		
&\nonumber			\leq\frac{1}{2}(\alpha_l\|\gamma(\mathbf{w}_\mathbf{x}^l - \mathbf{x}^l)\|_2^2 + \alpha_l^{-1}\|\gamma(\mathbf{w}_\mathbf{x}^{k-1}-\mathbf{x}^{k-1})\|_2^2)		
			\end{align}
			Substituting \eqref{eq:l26b} in \eqref{eq:a24} and using again Young's inequality with $\alpha=1$, we get the following result:
			\begin{align}
	&\nonumber
			\|\gamma(\mathbf{w}_\mathbf{x}^{k-1}-\mathbf{x}^{k-1})\|_2^2-\|\gamma(\mathbf{w}_\mathbf{x}^k-\mathbf{x}^k)\|_2^2\\		
&\nonumber			\leq \gamma(1+3L_{\hat{x}}N)(\|\gamma(\mathbf{w}_\mathbf{x}^{k-1}-\mathbf{x}^{k-1})\|_2^2 +\|\mathbf{x}^k-\mathbf{x}^{k-1}\|_2^2)\\		
&\label{eq:l27b}		+2\gamma N L_{\hat{x}}\sum\limits_{l=k-1-\delta}^{k-2} (\alpha_l\|\gamma(\mathbf{w}_\mathbf{x}^l-\mathbf{x}^l)\|_2^2 + \alpha_l^{-1}\|\gamma(\mathbf{w}_\mathbf{x}^{k-1}-\mathbf{x}^{k-1})\|_2^2)\\		
&\nonumber				=\gamma(1+3L_{\hat{x}}N)(\|\gamma(\mathbf{w}_\mathbf{x}^{k-1}-\mathbf{x}^{k-1})\|_2^2 			+\|\gamma(\hat{\mathbf{x}}_{i^{k-1}}(\tilde{\mathbf{x}}^{k-1}(\mathbf{d}^{k-1}))-\mathbf{x}^{k-1}_{i^{k-1}})\|_2^2)\\
			&\nonumber			+2\gamma N L_{\hat{x}}\sum\limits_{l=k-1-\delta}^{k-2} (\alpha_l\|\gamma(\mathbf{w}_\mathbf{x}^l-\mathbf{x}^l)\|_2^2 + \alpha_l^{-1}\|\gamma(\mathbf{w}_\mathbf{x}^{k-1}-\mathbf{x}^{k-1})\|_2^2)\\
			&\nonumber			\leq2\gamma(1+3L_{\hat{x}}N)\|\gamma(\mathbf{w}_\mathbf{x}^{k-1}-\mathbf{x}^{k-1})\|_2^2\\
			&\nonumber			+2\gamma NL_{\hat{x}}\sum\limits_{l=k-1-\delta}^{k-2} (\alpha_l\|\gamma(\mathbf{w}_\mathbf{x}^l-\mathbf{x}^l)\|_2^2 + \alpha_l^{-1}\|\gamma(\mathbf{w}_\mathbf{x}^{k-1}-\mathbf{x}^{k-1})\|_2^2)		
			\end{align}
			
			Assuming the inductive step to hold true up to the step $\|\mathbf{w}_\mathbf{x}^{k-2}-\mathbf{x}^{k-2}\|_2^2 \leq \rho \|\mathbf{w}_\mathbf{x}^{k-1}-\mathbf{x}^{k-1}\|_2^2$, we obtain:
			\begin{align}
&	\label{eq:l27c}			\|\gamma(\mathbf{w}_\mathbf{x}^{k-1}-\mathbf{x}^{k-1})\|_2^2-\|\gamma(\mathbf{w}_\mathbf{x}^k-\mathbf{x}^k)\|_2^2\;
			\leq \;2\gamma(1+3L_{\hat{x}}N)\|\gamma(\mathbf{w}_\mathbf{x}^{k-1}-\mathbf{x}^{k-1})\|_2^2\\
			&\nonumber +2\gamma NL_{\hat{x}}\sum\limits_{l=k-\delta-1}^{k-2}( \alpha_l\rho^{k-l-1}\|\gamma(\mathbf{w}_\mathbf{x}^{k-1}-\mathbf{x}^{k-1})\|_2^2+ \alpha_l^{-1}\|\gamma(\mathbf{w}_\mathbf{x}^{k-1}-\mathbf{x}^{k-1})\|_2^2).
			\end{align}
			Setting $\alpha_l=(\rho^{\frac{1+l-k}{2}})^{-1}$ and noticing that $\sum\limits_{l=k-\delta-1}^{k-2} \rho^{\frac{k-l-1}{2}}=\sum\limits_{t=1}^\delta\rho^{\frac{t}{2}}=\psi$ we get:
			\begin{align}
&\nonumber
			\|\gamma(\mathbf{w}_\mathbf{x}^{k-1}-\mathbf{x}^{k-1})\|_2^2-\|\gamma(\mathbf{w}_\mathbf{x}^k-\mathbf{x}^k)\|_2^2
			\leq 2\gamma(1+3L_{\hat{x}}N)\|\gamma(\mathbf{w}_\mathbf{x}^{k-1}-\mathbf{x}^{k-1})\|_2^2\\
	&		\label{eq:l27d}	+4\gamma NL_{\hat{x}}\sum\limits_{l=k-\delta-1}^{k-2} \rho^{\frac{k-l-1}{2}}\|\gamma(\mathbf{w}_\mathbf{x}^{k-1}-\mathbf{x}^{k-1})\|_2^2\\
	&\nonumber		
			=2\gamma(1+3L_{\hat{x}}N)\|\gamma(\mathbf{w}_\mathbf{x}^{k-1}-\mathbf{x}^{k-1})\|_2^2+4\gamma NL_{\hat{x}}\psi\|\gamma(\mathbf{w}_\mathbf{x}^{k-1}-\mathbf{x}^{k-1})\|_2^2.
			\end{align}
			Rearranging the terms we get the desired result
			\begin{align}
	&\label{eq:l27f}
			\|\gamma(\mathbf{w}_\mathbf{x}^{k-1}-\mathbf{x}^{k-1})\|_2^2\\	
&\nonumber			\leq (1- 2\gamma(1+L_{\hat{x}}N(3+2\psi)))^{-1}\|\gamma(\mathbf{w}_\mathbf{x}^k-\mathbf{x}^k)\|_2^2\;	\leq\; \rho\|\gamma(\mathbf{w}_\mathbf{x}^k-\mathbf{x}^k)\|_2^2.
			\end{align}
			\eqref{eq:l27f} holds true if:
			\begin{equation}
			\gamma\leq \frac{(1-\rho^{-1})}{2(1+L_{\hat{x}}N(3+2\psi))}
			\label{eq:step_cond}
			\end{equation}
			with $\rho>1$. Note that if we set: $\gamma\leq \frac{(1-\rho^{-1})}{2(1+L_{\hat{x}}N(3+2\psi))}$, both condition \eqref{eq:step_cond1} and \eqref{eq:step_cond} are satisfied.
		\end{proof}\medskip
	\end{asparaenum}
		\subsection{Proof of Theorem \ref{compl}}
		
		Let us define:
		\begin{equation}
		\hat{\mathbf{y}}_i^k=\underset{\mathbf{y}_i\in\mathcal{K}_i(\mathbf{x}^k_i)}{\text{arg min}}\{\nabla_{\mathbf{x}_i}f(\mathbf{x}^k)^\text{T}(\mathbf{y}_i-\mathbf{x}_i^k)+g_i(\mathbf{y}_i)+\frac{1}{2}\|\mathbf{y}_i-\mathbf{x}_i^k\|^2_2\}
		\label{34}
		\end{equation}
		and note that $M_F(\mathbf{x})=[\mathbf{x}_1^k-\hat{\mathbf{y}}_1^k,\ldots,\mathbf{x}_N^k-\hat{\mathbf{y}}_N^k]^\text{T}$. Let us consider in the following a given realization $\omega\in\bar{\Omega}$ and $k\geq0$. Relying on the first order optimality conditions for $\hat{\mathbf{y}}^k_{i^k}$ and using convexity of $g_{i^k}$ we can write the following inequality, that holds true for any $\mathbf{z}_{i^k} \in \mathcal{K}_{i^k}(\mathbf{x}_{i^k}^k)$:
		\begin{equation}
		(\nabla_{\mathbf{x}_{i^k}} f(\mathbf{x}^k) + \hat{\mathbf{y}}_{i^k}- \mathbf{x}^k_{i^k})^\text{T}(\mathbf{z}_{i^k} - \hat{\mathbf{y}}_{i^k}) + g_{i^k}(\mathbf{z}_{i^k}) - g_{i^k}(\hat{\mathbf{y}}_{i^k}) \geq 0 .
		\label{eq:t1}
		\end{equation}
		In a similar way we can use first order optimality condition for $\hat{\mathbf{x}}_{i^k}(\tilde{\mathbf{x}}^k)$ and convexity of $g_{i^k}$, obtaining:
		\begin{equation}
		\nabla \tilde{f}_{i^k}(\hat{\mathbf{x}}_{i^k}(\tilde{\mathbf{x}}^k);\tilde{\mathbf{x}}^k)^\text{T}(\mathbf{w}_{i^k} - \hat{\mathbf{x}}_{i^k}(\tilde{\mathbf{x}}^k)) + g_{i^k}(\mathbf{w}_{i^k}) - g_{i^k}(\hat{\mathbf{x}}_{i^k}(\tilde{\mathbf{x}}^k))) \geq 0 \, ,
		\label{eq:t2}
		\end{equation}
		that holds true for any $\mathbf{w}_i \in \mathcal{K}_{i^k}(\tilde{\mathbf{x}}_i^k)=\mathcal{K}_{i^k}(\mathbf{x}_i^k)$ (cf. D4). We can now sum up \eqref{eq:t1} and \eqref{eq:t2} together setting $\mathbf{w}_{i^k} = \hat{\mathbf{y}}_{i^k}$ and $\mathbf{z}_{i^k} = \hat{\mathbf{x}}_{i^k}(\tilde{\mathbf{x}}^k)$:
		\begin{equation}
		(\nabla_{\mathbf{x}_{i^k}} f(\mathbf{x}^k) - \nabla \tilde{f}_{i^k}(\hat{\mathbf{x}}_{i^k}(\tilde{\mathbf{x}}^k);\tilde{\mathbf{x}}^k) + \hat{\mathbf{y}}_{i^k} - \mathbf{x}^k_{i^k} )^\text{T}(\hat{\mathbf{x}}_{i^k}(\tilde{\mathbf{x}}^k) - \hat{\mathbf{y}}_{i^k})  \geq 0.
		\label{eq:t3}
		\end{equation}
		Summing and subtracting $\hat{\mathbf{x}}_{i^k}(\tilde{\mathbf{x}}^k)$ inside the first parenthesis on the left hand side and using the gradient consistency assumption B2 we get:
		\begin{equation}
		(\nabla \tilde{f}_{i^k}(\mathbf{x}^k_{i^k};\mathbf{x}^k) - \nabla \tilde{f}_{i^k}(\hat{\mathbf{x}}_{i^k}(\tilde{\mathbf{x}}^k);\tilde{\mathbf{x}}^k) + \hat{\mathbf{x}}_{i^k}(\tilde{\mathbf{x}}^k) - \mathbf{x}^k_{i^k} )^\text{T}(\hat{\mathbf{x}}_{i^k}(\tilde{\mathbf{x}}^k) - \hat{\mathbf{y}}_{i^k})\geq \|\hat{\mathbf{x}}_{i^k}(\tilde{\mathbf{x}}^k) - \hat{\mathbf{y}}_{i^k}\|_2^2 \, .
		\label{eq:t4}
		\end{equation}
		Applying Cauchy-Schwartz inequality to upper-bound the left hand side of \eqref{eq:t4} we obtain:
		\begin{equation}
		\|\nabla \tilde{f}_{i^k}(\mathbf{x}^k_{i^k};\mathbf{x}^k) - \nabla \tilde{f}_{i^k}(\hat{\mathbf{x}}_{i^k}(\tilde{\mathbf{x}}^k);\tilde{\mathbf{x}}^k) + \hat{\mathbf{x}}_{i^k}(\tilde{\mathbf{x}}^k) - \mathbf{x}^k_{i^k}\|_2 \geq \|\hat{\mathbf{x}}_{i^k}(\tilde{\mathbf{x}}^k) - \hat{\mathbf{y}}_{i^k}\|_2 \, ,
		\label{eq:t5}
		\end{equation}
		We can proceed summing and subtracting $\nabla \tilde{f}_{i^k}(\hat{\mathbf{x}}_{i^k}(\tilde{\mathbf{x}}^k);\mathbf{x}^k)$ inside the norm on the left hand side and then applying triangular inequality to get:
		\begin{equation}
		\def\arraystretch{2}\begin{array}{l}
		\|\nabla \tilde{f}_{i^k}(\hat{\mathbf{x}}_{i^k}(\tilde{\mathbf{x}}^k);\mathbf{x}^k) - \nabla \tilde{f}_{i^k}(\hat{\mathbf{x}}_{i^k}(\tilde{\mathbf{x}}^k);\tilde{\mathbf{x}}^k)\|_2\\+ \|\nabla \tilde{f}_{i^k}(\mathbf{x}^k_{i^k};\mathbf{x}^k) - \nabla \tilde{f}_{i^k}(\hat{\mathbf{x}}_{i^k}(\tilde{\mathbf{x}}^k);\mathbf{x}^k)\|_2+ \|\hat{\mathbf{x}}_{i^k}(\tilde{\mathbf{x}}^k) - \mathbf{x}^k_{i^k}\|_2\\
		
		\geq \|\hat{\mathbf{x}}_{i^k}(\tilde{\mathbf{x}}^k) - \hat{\mathbf{y}}_{i^k}\|_2,
		\end{array}
		\label{eq:t6}
		\end{equation}
		and we can further upper-bound the left hand side relying on the Lipschitz continuity assumptions B3 and B4:
		\begin{equation}
		\|\hat{\mathbf{x}}_{i^k}(\tilde{\mathbf{x}}^k) - \hat{\mathbf{y}}_{i^k}\|_2 \leq (1+L_E)\|\hat{\mathbf{x}}_{i^k}(\tilde{\mathbf{x}}^k) - \mathbf{x}^k_{i^k}\|_2 +L_B\| \mathbf{x}^k-\tilde{\mathbf{x}}^k\|_2\, .
		\label{eq:t7}
		\end{equation}
		Taking the square both sides we have:
		\begin{align}
		&\nonumber
		\|\hat{\mathbf{x}}_{i^k}(\tilde{\mathbf{x}}^k) - \hat{\mathbf{y}}_{i^k}\|_2^2\;		
		\leq\; (1+L_E)^2\|\hat{\mathbf{x}}_{i^k}(\tilde{\mathbf{x}}^k) - \mathbf{x}^k_{i^k}\|_2^2 + L_B^2\| \mathbf{x}^k-\tilde{\mathbf{x}}^k \|_2^2\\
		&+ 2L_B(1+L_E)\|\hat{\mathbf{x}}_{i^k}(\tilde{\mathbf{x}}^k) - \mathbf{x}^k_{i^k}\|_2\| \mathbf{x}^k-\tilde{\mathbf{x}}^k\|_2.
		\label{eq:t8}
		\end{align}
		We are now interested in bounding $\|\mathbf{x}_{i^k}^k - \hat{\mathbf{y}}_{i^k}\|_2^2$:
		\begin{align}
&\label{eq:compl1}			\|\mathbf{x}_{i^k}^k - \hat{\mathbf{y}}_{i^k}\|_2^2
		= \|\mathbf{x}_{i^k}^k - \hat{\mathbf{x}}_{i^k}(\tilde{\mathbf{x}}^k) +\hat{\mathbf{x}}_{i^k}(\tilde{\mathbf{x}}^k)- \hat{\mathbf{y}}_{i^k}\|_2^2 \\
	&	\nonumber
		\leq \;2\left(\|\hat{\mathbf{x}}_{i^k}(\tilde{\mathbf{x}}^k) - \mathbf{x}_{i^k}^k\|_2^2 + \|\hat{\mathbf{x}}_{i^k}(\tilde{\mathbf{x}}^k)- \hat{\mathbf{y}}_{i^k}\|_2^2\right) 
	\stackrel{(a)}{\leq} (2+2(1+L_E)^2)\|\hat{\mathbf{x}}_{i^k}(\tilde{\mathbf{x}}^k) - \mathbf{x}_{i^k}^k\|_2^2\\
	&\nonumber		+ 2L_B^2\| \mathbf{x}^k-\tilde{\mathbf{x}}^k \|_2^2 + 4L_B(1+L_E)\|\hat{\mathbf{x}}_{i^k}(\tilde{\mathbf{x}}^k) - \mathbf{x}^k_{i^k}\|_2\| \mathbf{x}^k-\tilde{\mathbf{x}}^k \|_2
		\end{align}
		where (a) comes from \eqref{eq:t8}.
		Taking now conditional expectation and using D2:
		\begin{equation}
		\mathbb{E}(\|\mathbf{x}_{\underline{i}^k}^k - \hat{\mathbf{y}}_{\underline{i}^k}\|_2^2|\mathcal{F}^{k-1})(\omega)
		\;=\; \sum\limits_{i=1}^Np(i|\omega^{0:k-1})\|\mathbf{x}_i^k - \hat{\mathbf{y}}_i\|_2^2\;	
		\geq\; p_\text{min}\|\mathbf{x}^k-\hat{\mathbf{y}}\|_2^2,
		\label{eq:compl2}
		\end{equation}
		It follows:
		\begin{align}
&\nonumber		p_\text{min}\|\mathbf{x}^k - \hat{\mathbf{y}}\|_2^2\\
&\nonumber			
		\stackrel{(a)}{\leq} (2+2(1+L_E)^2)\mathbb{E}(\|\hat{\mathbf{x}}_{\underline{i}^k}(\underline{\tilde{\mathbf{x}}}^k) - \mathbf{x}^k_{\underline{i}^k}\|_2^2|\mathcal{F}^{k-1})(\omega)
			+ 2L_B^2\mathbb{E}(\|\mathbf{x}^k-\underline{\tilde{\mathbf{x}}}^k\|_2^2|\mathcal{F}^{k-1})(\omega)\\
			&\nonumber	+ 4L_B(1+L_E)\mathbb{E}(\|\hat{\mathbf{x}}_{\underline{i}^k}(\underline{\tilde{\mathbf{x}}}^k) - \mathbf{x}^k_{\underline{i}^k}\|_2\|\mathbf{x}^k-\underline{\tilde{\mathbf{x}}}^k\|_2|\mathcal{F}^{k-1})(\omega)\\
		&\nonumber
		\stackrel{(b)}{\leq} 2(1+(1+L_E)(1+L_B+L_E))\|\mathbf{w}_\mathbf{x}^k-\mathbf{x}^k\|_2^2\\
		&\label{eq:t9}+2L_B(1+L_B+L_E)\mathbb{E}(\|\mathbf{x}^k-\underline{\tilde{\mathbf{x}}}^k\|_2^2|\mathcal{F}^{k-1})(\omega),		
		\end{align}
		where in (a) we combined \eqref{eq:compl1} and \eqref{eq:compl2} together while (b) follows from the Young's inequality with $\alpha=1$ and the definition of $\mathbf{w}_\mathbf{x}$.\\
		In order to bound \eqref{eq:t9} we need the following result:
		\begin{align}
&\nonumber
		\mathbb{E}(\|\mathbf{x}^k-\underline{\tilde{\mathbf{x}}}^k\|_2^2|\mathcal{F}^{k-1})(\omega)	
	\;	\stackrel{(a)}{\leq}\;\mathbb{E}\left(\left(\sum\limits_{l=k-\delta}^{k-1}\|\mathbf{x}^{l+1}-\mathbf{x}^l\|_2^2\right)^2|\mathcal{F}^{k-1}\right)(\omega)\\
	&	\label{eq:9gen}
		=\left(\sum\limits_{l=k-\delta}^{k-1}\|\mathbf{x}^{l+1}-\mathbf{x}^l\|_2^2\right)^2
		\;\stackrel{(b)}{\leq}\;\delta\sum\limits_{l=k-\delta}^{k-1}\|\mathbf{x}^{l+1}-\mathbf{x}^l\|_2^2\\		
&\nonumber		=\delta\gamma^2\sum\limits_{l=k-\delta}^{k-1}\|\hat{\mathbf{x}}_{i^l}(\tilde{\mathbf{x}}^l)-\mathbf{x}_{i^l}^l\|_2^2		
	\;	\leq\;\delta\gamma^2\sum\limits_{l=k-\delta}^{k-1}\|\mathbf{w}_\mathbf{x}^l-\mathbf{x}^l\|_2^2		
		\;\stackrel{(c)}{\leq}\;\delta\psi'\gamma^2\|\mathbf{w}_\mathbf{x}^k-\mathbf{x}^k\|_2^2
		\end{align}
		where (a) follows from \eqref{eq:x_tilde}; (b) from the Jensen's inequality; and (c) from Lemma \ref{lemma_wright}.\\
		The left hand side of \eqref{eq:t9} is nothing else than $\|M_F(\mathbf{x}^k)\|_2^2$, which is our optimality measure; substituting \eqref{eq:9gen} into \eqref{eq:t9}:
		\begin{equation}
		p_\text{min}\|M_F(\mathbf{x}^k)\|_2^2
		\leq2(1+(1+L_B+L_E)(1+L_EL_B\delta\psi'\gamma^2))\|\mathbf{w}_\mathbf{x}^k-\mathbf{x}^k\|_2^2.
		\label{eq:42}
		\end{equation}
		We now need a bound for the quantity $\|\mathbf{w}_\mathbf{x}^k-\mathbf{x}^k\|_2^2$. For any $k\geq0$ we have:
		\begin{equation}
		\def\arraystretch{2}\begin{array}{l}
		
		F(\mathbf{x}^{k+1}) \\
		
		= f(\mathbf{x}^{k+1}) + g(\mathbf{x}^{k+1})\\
		
		\stackrel{\text{(a)}}{=} f(\mathbf{x}^{k+1}) + \sum\limits_{i \neq i^k} g_i(\mathbf{x}_i^{k+1}) + g_{i^k}(\mathbf{x}_{i^k}^{k+1})\\
		
		\stackrel{\text{(b)}}{=} f(\mathbf{x}^{k+1}) + \sum\limits_{i \neq i^k} g_i(\mathbf{x}_i^k) + g_{i^k}(\mathbf{x}_{i^k}^{k+1})\\
		
		\stackrel{\text{(c)}}{\leq} f(\mathbf{x}^k)  +\gamma\nabla_{\mathbf{x}_{i^k}} f (\mathbf{x}^k)^\text{T}(\hat{\mathbf{x}}_{i^k}(\tilde{\mathbf{x}}^k) - \mathbf{x}_{i^k}^k)+ \frac{\gamma^2L_f}{2}\|\hat{\mathbf{x}}_{i^k}(\tilde{\mathbf{x}}^k) - \mathbf{x}_{i^k}^k\|_2^2\\+ \sum\limits_{i \neq i^k} g_i(\mathbf{x}_i^k)+ g_{i^k}(\mathbf{x}_{i^k}^{k+1})\\
		
		=f(\mathbf{x}^k) + \gamma\nabla_{\mathbf{x}_{i^k}} f (\tilde{\mathbf{x}}^k)^\text{T}(\hat{\mathbf{x}}_{i^k}(\tilde{\mathbf{x}}^k) - \mathbf{x}_{i^k}^k)\\+(\nabla_{\mathbf{x}_{i^k}} f (\mathbf{x}^k)-\nabla_{\mathbf{x}_{i^k}}f(\tilde{\mathbf{x}}^k))^\text{T}(\gamma(\hat{\mathbf{x}}_{i^k}(\tilde{\mathbf{x}}^k) - \mathbf{x}_{i^k}^k))+\frac{\gamma^2L_f}{2}\|\hat{\mathbf{x}}_{i^k}(\tilde{\mathbf{x}}^k) - \mathbf{x}_{i^k}^k\|_2^2\\+ \sum\limits_{i \neq i^k} g_i(\mathbf{x}_i^k)+ g_{i^k}(\mathbf{x}_{i^k}^{k+1})\\
		
		\stackrel{\text{(d)}}{\leq} f(\mathbf{x}^k)  +\gamma\nabla_{\mathbf{x}_{i^k}} f (\tilde{\mathbf{x}}^k)^\text{T}(\hat{\mathbf{x}}_{i^k}(\tilde{\mathbf{x}}^k) - \tilde{\mathbf{x}}_{i^k}^k)\\+(\nabla_{\mathbf{x}_{i^k}}f(\mathbf{x}^k)-\nabla_{\mathbf{x}_{i^k}}f(\tilde{\mathbf{x}}^k))^\text{T}(\gamma(\hat{\mathbf{x}}_{i^k}(\tilde{\mathbf{x}}^k) - \tilde{\mathbf{x}}_{i^k}^k))+ \frac{\gamma^2L_f}{2}\|\hat{\mathbf{x}}_{i^k}(\tilde{\mathbf{x}}^k) - \tilde{\mathbf{x}}_{i^k}^k\|_2^2\\+ \sum\limits_{i \neq i^k} g_i(\mathbf{x}_i^k) + \gamma g_{i^k}(\hat{\mathbf{x}}_{i^k}(\tilde{\mathbf{x}}^k))+g_{i^k}(\mathbf{x}_{i^k}^k)-\gamma g_{i^k}(\tilde{\mathbf{x}}_{i^k}^k)\\
		
		\stackrel{\text{(e)}}{\leq} F(\mathbf{x}^k) -\gamma(c_{\tilde{f}}-\frac{\gamma L_f}{2})\|\hat{\mathbf{x}}_{i^k}(\tilde{\mathbf{x}}^k)-\tilde{\mathbf{x}}_{i^k}^k\|_2^2+L_f\|\mathbf{x}^k-\tilde{\mathbf{x}}^k\|_2\|\gamma(\hat{\mathbf{x}}_{i^k}(\tilde{\mathbf{x}}^k) - \tilde{\mathbf{x}}_{i^k}^k)\|_2\\
		
		\stackrel{\text{(f)}}{\leq} F(\mathbf{x}^k)-\gamma(c_{\tilde{f}}-\gamma L_f)\|\hat{\mathbf{x}}_{i^k}(\tilde{\mathbf{x}}^k)-\tilde{\mathbf{x}}_{i^k}^k\|_2^2+\frac{L_f}{2}\|\mathbf{x}^k-\tilde{\mathbf{x}}^k\|_2^2\\
		
		\stackrel{\text{(g)}}{=} F(\mathbf{x}^k)-\gamma(c_{\tilde{f}}-\gamma L_f)\|\hat{\mathbf{x}}_{i^k}(\tilde{\mathbf{x}}^k)-\mathbf{x}_{i^k}^k\|_2^2+\frac{L_f}{2}\|\mathbf{x}^k-\tilde{\mathbf{x}}^k\|_2^2,
		\end{array}
		\end{equation}
		where in (a) we used the separability of $g$; (b) follows from the updating rule of the algorithm; in (c) we applied the Descent Lemma \cite{Bertsekas_Book-Parallel-Comp} on $f$; (d) comes from the convexity of $g_i$ and D4; in (e) we used Proposition \ref{Prop_best_response_ncc} and Assumption A3; (f) is due to the Young's inequality, with $\alpha=1$; and (g) comes from D4.\\
		Taking conditional expectations both sides, we have:
		\begin{equation}
		\def\arraystretch{2}\begin{array}{l}
		\mathbb{E}(F(\underline{\mathbf{x}}^{k+1})|\mathcal{F}^{k-1})(\omega)\\
		
		\leq F(\mathbf{x}^k)-\gamma(c_{\tilde{f}}-\gamma L_f)\mathbb{E}(\|\hat{\mathbf{x}}_{\underline{i}^k}(\underline{\tilde{\mathbf{x}}}^k)-\mathbf{x}_{\underline{i}^k}^k\|_2^2|\mathcal{F}^{k-1})(\omega)+\frac{L_f}{2}\mathbb{E}(\|\mathbf{x}^k-\underline{\tilde{\mathbf{x}}}^k\|_2^2|\mathcal{F}^{k-1})(\omega)\\
		
		\stackrel{(a)}{\leq} F(\mathbf{x}^k)-\gamma(c_{\tilde{f}}-\gamma L_f)\sum\limits_{(i,\mathbf{d})\in\mathcal{V}(\omega)}p((i,\mathbf{d})|\omega^{0:k-1})\|\hat{\mathbf{x}}_i(\tilde{\mathbf{x}}^k(\mathbf{d}))-\mathbf{x}_i^k\|_2^2\\+\frac{\delta\psi'\gamma^2L_f}{2}\|\mathbf{w}_\mathbf{x}^k-\mathbf{x}^k\|_2^2\\
		
		\stackrel{(b)}{\leq} F(\mathbf{x}^k)-\Delta\gamma(c_{\tilde{f}}-\gamma L_f)\sum\limits_{i=1}^{N}\|\hat{\mathbf{x}}_i(\tilde{\mathbf{x}}^k(\mathbf{d}_{j_{i,k}}))-\mathbf{x}_i^k\|_2^2+\frac{\delta\psi'\gamma^2L_f}{2}\|\mathbf{w}_\mathbf{x}^k-\mathbf{x}^k\|_2^2\\
		
		= F(\mathbf{x}^k)-\gamma\Delta(c_{\tilde{f}}-\gamma L_f)\|\mathbf{w}_\mathbf{x}^k-\mathbf{x}^k\|_2^2+\frac{\delta\psi'\gamma^2L_f}{2}\|\mathbf{w}_\mathbf{x}^k-\mathbf{x}^k\|_2^2\\
		
		\stackrel{(c)}{=}F(\mathbf{x}^k)-\gamma\left(\Delta(c_{\tilde{f}}-\gamma L_f)-\frac{\gamma\delta\psi'L_f}{2}\right)\|\mathbf{w}_\mathbf{x}^k-\mathbf{x}^k\|_2^2,
		\end{array}
		\label{eq:44}
		\end{equation}
		where (a) follows from \eqref{eq:9gen}; (b) comes from Assumption D3 and holds true for $\gamma\leq\frac{c_{\tilde{f}}}{L_f}$; in (c) we require: $\gamma\leq\frac{2\Delta c_{\tilde{f}}}{2\Delta L_f+\delta\psi'L_f}$.
		Taking expectations both sides of \eqref{eq:44} and rearranging the terms, we get:
		\begin{equation}
		\mathbb{E}(\|\underline{\mathbf{w}}_\mathbf{x}^k-\underline{\mathbf{x}}^k\|_2^2)\leq\frac{2}{\gamma(2\Delta(c_{\tilde{f}}-\gamma L_f)-\gamma\delta\psi'L_f)}\mathbb{E}(F(\underline{\mathbf{x}}^k)-F(\underline{\mathbf{x}}^{k+1})).
		\end{equation}
		Using this result in \eqref{eq:42} we finally have:
		\begin{equation}
		\mathbb{E}(\|M_F(\mathbf{x}^k)\|_2^2)\leq\frac{4(1+(1+L_B+L_E)(1+L_EL_B\delta\psi'\gamma^2))}{p_{\text{min}}\gamma(2\Delta(c_{\tilde{f}}-\gamma L_f)-\gamma\delta\psi'L_f)}\mathbb{E}(F(\underline{\mathbf{x}}^k)-F(\underline{\mathbf{x}}^{k+1})),	
		\end{equation}
		and:
		\begin{equation}
		\def\arraystretch{2}\begin{array}{l}
		K_\epsilon\epsilon
		
		\leq\sum\limits_{k=0}^{K_\epsilon}\mathbb{E}(\|M_F(\mathbf{x}^k)\|_2^2)\\
		
		\leq\sum\limits_{k=0}^{K_\epsilon}\frac{4(1+(1+L_B+L_E)(1+L_EL_B\delta\psi'\gamma^2))}{p_{\text{min}}\gamma(2\Delta(c_{\tilde{f}}-\gamma L_f)-\gamma\delta\psi'L_f)}\mathbb{E}(F(\underline{\mathbf{x}}^k)-F(\underline{\mathbf{x}}^{k+1}))\\
		
		=\frac{4(1+(1+L_B+L_E)(1+L_EL_B\delta\psi'\gamma^2))}{p_{\text{min}}\gamma(2\Delta(c_{\tilde{f}}-\gamma L_f)-\gamma\delta\psi'L_f)}\mathbb{E}(F(\mathbf{x}^0)-F(\underline{\mathbf{x}}^{K_\epsilon+1}))\\
		
		\leq \frac{4(1+(1+L_B+L_E)(1+L_EL_B\delta\psi'\gamma^2))}{p_{\text{min}}\gamma(2\Delta(c_{\tilde{f}}-\gamma L_f)-\gamma\delta\psi'L_f)}(F(\mathbf{x}^0)-F^*).
		\end{array}
		\end{equation}
		This completes the proof.
		
\end{document}